\newtheorem{question}{Question}
\newcommand{\R}{\mathbb{R}}
\newcommand{\N}{\mathbb{N}}
\newcommand{\C}{\mathbb{C}}
\newcommand{\K}{\mathbb{K}}
\DeclareMathOperator{\trace}{trace}
\newcommand{\cpangle}{\overline{\angle}}
\numberwithin{equation}{section}
\renewcommand{\epsilon}{\varepsilon}
\DeclareMathOperator{\Wedge}{\wedge^2}
\begin{document}
	
\title[Infinite dimensional Riemannian symmetric spaces]{Infinite dimensional Riemannian symmetric spaces\\
with fixed-sign curvature operator}
\author{\firstname{Bruno} \lastname{Duchesne}}
\thanks{The author was supported by a starting grant from the Swiss National Foundation.}
\email{bruno.duchesne@ens-lyon.org}
\address{Einstein Institute of Mathematics\\ Edmond J. Safra Campus, Givat Ram\\The Hebrew University of Jerusalem\\Jerusalem, 91904, Israel\\ \\ \textit{current address}$\colon$\\  Institut Elie Cartan\\ Universit\'e de Lorraine\\  54506 Vandoeuvre-l\`es-Nancy, France }
\maketitle
\begin{abstract} We associate to any Riemannian symmetric space (of finite or infinite dimension) a L$^*$-algebra, under the assumption that the curvature operator has a fixed sign. L$^*$-algebras are Lie algebras with a pleasant Hilbert space structure. The L$^*$-algebra that we construct is a complete local isomorphism invariant and allows us to  classify Riemannian symmetric spaces with fixed-sign curvature operator. The case of nonpositive curvature is emphasized.
\end{abstract}
\section{Introduction}
\subsection{Riemannian symmetric spaces}
At the very end of the nineteenth century and during the beginning of the twentieth century, E. Cartan did a famous work of classification. He began by completing the proof (by W. Killing) of the classification of complex semisimple Lie algebras during his Ph.D. thesis and he continued by classifying real semisimple Lie algebras. Some years later, he introduced the so-called Riemannian symmetric spaces (``\textit{Une classe remarquable d'espaces de Riemann}") and classified them. The classification of symmetric spaces was reminiscent of the classification of real forms of complex semisimple Lie algebras (see \cite{MR1847105}).

Infinite dimensional differential geometry grew up from the twentieth century (see \cite{MR0203742} for an outline of the theory in the sixties and see \cite{MR1666820} for a more recent exposition) and it is not difficult to define when a Riemannian manifold, that is a manifold modeled on a separable Hilbert space with a Riemannian metric, is a symmetric space. Let $(M,g)$ be a Riemannian manifold, a \textit{symmetry} at a point $p$ is an involutive isometry $\sigma_p\colon M\to M$ such that $\sigma_p(p)=p$ and the differential at $p$ is -Id. A \textit{Riemannian symmetric  space} is a Riemannian manifold such that, at each point, there exists a symmetry.

An idea to classify these spaces could be to associate a ``semisimple" Lie algebra to them, to classify infinite dimensional semisimple Lie algebras and then return to symmetric spaces. We do not know a general classification of infinite dimensional Lie algebras nor a good notion of semisimple Lie algebras. Nonetheless, there is a remarkable exception to this lack of classification. R. Schue introduced complex L$^*$-algebras (Lie algebras with a compatible structure of Hilbert space, see Section \ref{L$^*$-algebras}) and classified the separable ones in \cite{MR0117575,MR0133408}. Later, independently, V.K. Balachandran \cite{MR0347920}, P. de la Harpe \cite{MR0282218} and I. Unsain \cite{MR0325721} classified separable real L$^*$-algebras.

Each L$^*$-algebra is an orthogonal sum of an abelian ideal and a semisimple ideal. Each separable semisimple L$^*$-algebra is a Hilbertian sum of simple ones. The simple L$^*$-algebras of infinite dimension belong to a finite list with three infinite families. They are closure of an increasing union of simple Lie algebras of finite dimension and classical type.

Unfortunately, the Lie algebra of the isometry group of a Riemannian symmetric space has no reason to be a L$^*$-algebra. For example, consider the Riemannian symmetric space $P^2(\infty)\simeq GL^2_\infty(\R)/O^2(\infty)$, that is the space of positive invertible operators of some separable real Hilbert space, which are Hilbert-Schmidt perturbations of the identity. This space is an infinite dimensional generalization of the symmetric space $SL_n(\R)/SO_n(\R)$ (See \cite[III.2]{MR0476820} and \cite{MR2373944}). The full orthogonal group $O(\infty)$ acts isometrically by conjugation on $P^2(\infty)$. In particular, the Lie algebra of all bounded skew-symmetric operators is a subalgebra of the Lie algebra of the isometry group. It is naturally a Banach Lie algebra but not a L$^*$-algebra.
\begin{rema}Michael Klotz proved in \cite[Theorem 5.24]{Klotz:2009qy} that any connected Banach symmetric space $M$ is an homogeneous space $G/K$ where $G$ is the group of automorphisms of $M$ and $K$ is a Banach-Lie Group. This result legitimizes the definition of Riemannian symmetric spaces that appears in \cite{MR2456274}. Moreover, it seems to be known that the isometry group of a Riemannian space is a Banach-Lie group but we do not know any reference. In the sequel, we do not use such result and the Lie algebra of Killing fields will play the role of the Lie algebra of the isometry group. In finite dimension, the Lie algebra of the isometry group of a Riemannian symmetric space and the algebra of Killing fields are naturally isomorphic.
\end{rema}
In the following theorem, we show that if one looks at a smaller (but large enough to encode the Riemann tensor) Lie algebra, one can find a L$^*$-algebra. We refer to section \ref{construction} for the definition of the curvature operator. 
\begin{theorem}\label{existence}Let $(M,g)$ be a simply-connected Riemannian symmetric space and let $p$ be a point in $M$. If $M$ has a fixed-sign curvature operator then there exists a real L$^*$-algebra $L$ with an orthogonal decomposition of Hilbert spaces
\[L=\mathfrak{k}\oplus\mathfrak{p}\] which has the following properties :
\begin{enumerate}[(i)]
\item the subspace $\mathfrak{k}$ is a L$^*$-subalgebra of $L$ and $\mathfrak{p}$ is isometric to the tangent space $T_pM$,
\item the Lie algebra generated by $\mathfrak{p}$ is dense in $L$ and is isomorphic to a subalgebra of the Lie algebra of Killing fields on $M$. 
\end{enumerate} 
\end{theorem}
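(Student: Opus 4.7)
The plan is to imitate the finite-dimensional Cartan construction. First I would set $\mathfrak{p} = T_pM$ with inner product $g_p$, define a candidate bracket $[X,Y] := R(X,Y)$ in $\mathfrak{gl}(\mathfrak{p})$ (up to a sign convention) for $X,Y \in \mathfrak{p}$, and let $\mathfrak{k}_0 \subseteq \mathfrak{gl}(\mathfrak{p})$ be the Lie subalgebra generated by the operators $R(X,Y)$. Extend the bracket to $L_0 := \mathfrak{k}_0 \oplus \mathfrak{p}$ by $[A,X] := A(X)$ and $[A,B] := AB - BA$. The Jacobi identity on $L_0$ is the classical reformulation of the Lie triple system axiom on $\mathfrak{p}$, which follows from the Riemann curvature identities (pair symmetry, first Bianchi) together with $\nabla R = 0$ for a symmetric space.

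The Hilbert structure is installed through the curvature operator itself. The linear map $\pi \colon \Lambda^2 \mathfrak{p} \to \mathfrak{gl}(\mathfrak{p})$, $\pi(X \wedge Y) = R(X,Y)$, has kernel exactly $\ker \hat R$ by the curvature symmetries, so it identifies $\pi(\Lambda^2 \mathfrak{p})$ with the quotient $\Lambda^2 \mathfrak{p} / \ker \hat R$. The form $\pm \hat R$ then descends to a positive definite inner product on this quotient, and Hilbert-completing yields $\mathfrak{k}$; this is where the fixed-sign hypothesis enters essentially, since only then does $\pm \hat R$ provide a genuine inner product. Define the Cartan involution $\theta$ with $\theta|_\mathfrak{k} = \Id$ and $\theta|_\mathfrak{p} = -\Id$, and take the $L^*$-involution $x^* := -\theta(x)$. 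The axiom $\langle [x,y], z\rangle = \langle y, [x^*, z]\rangle$ then reduces, by splitting into the three cases according to whether $x,y,z$ lie in $\mathfrak{k}$ or $\mathfrak{p}$, to the pair symmetry $\langle R(X,Y)Z, W\rangle = \langle R(Z,W)X, Y\rangle$ of the Riemann tensor.

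The main obstacle I foresee is proving that all brackets extend continuously from $L_0$ to the Hilbert completion $L$: specifically that the iterated commutators of elements of $\mathfrak{k}_0$ remain controlled in the $\mathfrak{k}$-norm, and that the action of $\mathfrak{k}$ on $\mathfrak{p}$ stays bounded. This is exactly where the positivity of $\pm \hat R$ is used beyond its self-adjointness, presumably via a Cauchy–Schwarz estimate relating the $\mathfrak{k}$-norm of $\pi(\alpha)$ to the operator norm of $\pi(\alpha)$ acting on $\mathfrak{p}$. Finally, the identification of $L_0$ with a subalgebra of Killing fields on $M$ is standard exponentiation: each $X \in \mathfrak{p}$ integrates to the Killing field generating the geodesic one-parameter group with initial velocity $X$ at $p$, while each curvature operator $R(X,Y) \in \mathfrak{k}_0$ integrates to an infinitesimal isometry fixing $p$, and the resulting bracket on vector fields matches the bracket of $L_0$ by the formula for the Lie derivative of Killing fields in a symmetric space. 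The density claim is automatic: the Lie algebra generated by $\mathfrak{p}$ inside $L$ equals $L_0$ by construction, and $\mathfrak{k}_0$ is dense in $\mathfrak{k}$ by definition of the completion.
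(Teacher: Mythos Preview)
Your plan is essentially the paper's own proof: the paper likewise takes $\mathfrak{p}=T_pM$, puts the inner product $\langle[X,Y],[Z,T]\rangle=\pm R(X,Y,Z,T)$ on $[\mathfrak{p},\mathfrak{p}]$ (working inside the Killing field algebra rather than $\mathfrak{gl}(\mathfrak{p})$, but this is the same object via $R(X,Y)Z=[Z,[X,Y]]$), completes to $\mathfrak{k}$, and obtains the continuity of the brackets from boundedness of the Riemann $4$-tensor together with exactly the Cauchy--Schwarz estimate you anticipate, yielding $\|[X,Y]\|\le\sqrt\kappa\,\|X\|\,\|Y\|$, a bound for $[\mathfrak{k},\mathfrak{p}]$, and a weak-topology bound for $[\mathfrak{k},\mathfrak{k}]$. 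The one correction: your choice $x^*=-\theta(x)$ only verifies the $L^*$-axiom in the nonpositive case; for nonnegative curvature operator the paper takes $X^*=-X$ on $\mathfrak{p}$ as well (so $*=-\mathrm{Id}$ on all of $L$, compact type), and without this sign the identity $\langle[X,U],Y\rangle=\langle U,[X^*,Y]\rangle$ fails for $X,Y\in\mathfrak{p}$, $U\in\mathfrak{k}$.
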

The L$^*$-algebra obtained in Theorem \ref{existence} is the only one which satisfies properties (i) and (ii) (see Lemma \ref{uniqueness}). We call it a L$^*$-\textit{algebra associated to} $(M,g)$. The universal cover of a Riemannian symmetric space is a Riemannian symmetric space too (Proposition \ref{universal}). The L$^*$-algebra constructed allows us to give a complete description of  Riemannian symmetric spaces with fixed-sign curvature operator up to local isomorphism.
\begin{theorem}\label{localisomorphism}
Let $(M,g)$ and $(M',g')$ be Riemannian symmetric spaces with fixed-sign curvature operator. Let $L,L'$ be L$^*$-algebras associated to the universal covers $\widetilde{M}$ and $\widetilde{M'}$ as in Theorem \ref{existence}.

If there exists an isomorphism  of L$^*$-algebras between $L$ and $L'$ which intertwines the orthogonal decompositions $L=\mathfrak{k}\oplus\mathfrak{p}$ and $L'=\mathfrak{k}'\oplus\mathfrak{p}'$ then $M$ and $M'$ are locally isomorphic.
\end{theorem}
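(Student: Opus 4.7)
The plan is to use the L$^*$-isomorphism $\Phi:L\to L'$ to build an explicit local isometry from a neighborhood of $p$ in $M$ to a neighborhood of a suitable point $p'$ in $M'$, along the lines of Cartan's classical local isometry criterion. First, by property (i) of Theorem \ref{existence}, the spaces $\mathfrak{p}$ and $\mathfrak{p}'$ are canonically identified with $T_pM$ and $T_{p'}M'$ as real Hilbert spaces, so the restriction $\phi:=\Phi|_{\mathfrak{p}}:\mathfrak{p}\to\mathfrak{p}'$ is a linear isometry $T_pM\to T_{p'}M'$.

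Next, I would verify that $\phi$ intertwines the Riemann curvature tensors at $p$ and $p'$. In a Riemannian symmetric space, the curvature at the base point is encoded by the triple bracket on its Cartan decomposition,
\[
R_p(X,Y)Z \;=\; -\bigl[[X,Y],Z\bigr], \qquad X,Y,Z\in\mathfrak{p},
\]
using that $[\mathfrak{p},\mathfrak{p}]\subseteq\mathfrak{k}$ acts on $\mathfrak{p}$ by $\mathrm{ad}$. Since $\Phi$ is a Lie algebra isomorphism respecting the orthogonal decompositions, it intertwines these triple brackets, so $\phi$ intertwines $R_p$ with $R_{p'}$.

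Finally, set $f:=\exp_{p'}\circ\phi\circ\exp_p^{-1}$ on a neighborhood of $p$ on which $\exp_p$ is a diffeomorphism (granted by the inverse function theorem in the Hilbert setting, since $d\exp_p|_0=\Id$). To check that $f$ is a local isometry it suffices to compare the differentials of the two exponential maps along radial geodesics. These differentials satisfy the Jacobi equation, which, because $\nabla R=0$ in a symmetric space, is constant-coefficient and can be solved in closed form through the functional calculus of the bounded self-adjoint operator $w\mapsto R_p(v,w)v$ on $T_pM$. The fixed-sign hypothesis guarantees a spectrum lying on one side of the origin, so this functional calculus is unambiguous and converges. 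Since $\phi$ intertwines the curvature operators, it intertwines the resulting functional calculi, hence the differentials of $\exp_p$ and $\exp_{p'}$ along corresponding geodesics; therefore $f^{*}g'=g$ near $p$, giving the desired local isomorphism.

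The main obstacle is that no off-the-shelf Cartan-Ambrose-Hicks theorem is available in infinite dimension. What rescues the argument is precisely the combination of the two hypotheses in play: the symmetric space condition $\nabla R=0$ reduces the Jacobi equation to a constant-coefficient operator ODE on a Hilbert space, while the fixed-sign condition on the curvature operator provides the spectral control needed to solve this ODE by functional calculus. Uniqueness of the L$^*$-algebra associated to a symmetric space (Lemma \ref{uniqueness} alluded to after Theorem \ref{existence}) ensures that the construction does not depend on incidental choices, so the resulting local isomorphism genuinely reflects the hypothesis on $L$ and $L'$.
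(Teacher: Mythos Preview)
Your approach is essentially the paper's: restrict the L$^*$-isomorphism to $\mathfrak{p}$ to obtain a linear isometry $T_pM\to T_{p'}M'$, check that it intertwines the curvature tensors via the triple-bracket formula $R(X,Y)Z=-[[X,Y],Z]$, and push this through the exponential maps to get a local isometry. Two points are worth correcting, however.

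First, an infinite-dimensional Cartan theorem \emph{is} available off the shelf: the paper simply invokes Klingenberg \cite[Theorem~1.12.8]{MR1330918}. The verification of its hypothesis is done not by solving the Jacobi equation directly but by using that $R$ is parallel in a symmetric space, so that the linear isometries $\mathfrak{i}_t=P_{0,c'}^t\circ\mathfrak{i}_p\circ P_{t,c}^0$ obtained by conjugating with parallel transport intertwine $R_{\dot c(t)}$ and $R_{\dot c'(t)}$ for every $t$. This is cleaner than re-deriving the Jacobi solution.

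Second, the fixed-sign hypothesis plays no role at this step, and your appeal to it is spurious. The Jacobi equation with constant coefficients is solved by applying the \emph{entire} functions $z\mapsto\cos(t\sqrt{z})$ and $z\mapsto\sin(t\sqrt{z})/\sqrt{z}$ to the bounded self-adjoint operator $R_v$; their power series converge absolutely in operator norm for any bounded operator, with no sign condition on the spectrum. The fixed-sign assumption on the curvature operator was used earlier, in Theorem~\ref{existence}, to build the L$^*$-structure on $\mathfrak{k}$; once the L$^*$-algebras and the isomorphism are in hand, the local isometry argument needs nothing more than $\nabla R=0$.
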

 If the curvature operator of a Riemannian manifold is nonpositive (respectively nonnegative) then the sectional curvature is nonpositive (respectively nonnegative) but the converse is false in general (See, e.g., \cite[§1.3]{MR0454884}). In finite dimension, a Riemannian symmetric space has nonpositive (respectively nonnegative) curvature operator if and only if it has nonpositive (respectively nonnegative) sectional curvature. This fact holds because the Riemann tensor is encoded in the Killing form of the Lie algebra of the isometry group (See \cite[Theorem 6]{MR0148010}, \cite[Section 4]{MR0454884} or Equation (\ref{Killing})). The main idea of this paper is to construct an analog of the Killing form starting from the Riemann tensor. It is natural to ask whether fix-sign sectional curvature implies fix-sign curvature operator, in infinite dimension too. More generally, we have the following question.
\begin{question}\label{question}
Is it true that for any Riemannian symmetric space, there is an orthogonal decomposition of the tangent space $\mathfrak{p}=\mathfrak{p}_-\oplus\mathfrak{p}_0\oplus\mathfrak{p}_+$ such that $\mathfrak{p}_-,\mathfrak{p}_0$ and $\mathfrak{p}_+$ are commuting Lie triple systems and the restrictions of the curvature operator is nonnegative on $\mathfrak{p}_-$, vanishes on $\mathfrak{p}_0$ and is nonpositive on $\mathfrak{p}_+$ ?
\end{question}
A positive answer to this question would imply a complete classification of simply-connected separable Riemannian symmetric spaces --- that is without any assumption on the curvature operator. Actually, if a Riemannian symmetric space has a dense increasing sequence of totally geodesic subspaces of finite dimension then Proposition \ref{union} shows that the answer to the above question is positive. Moreover, subsequent theorems will show that such a decomposition of the tangent space will imply the existence of a dense increasing sequence of totally geodesic subspaces of finite dimension.

To decompose Riemannian symmetric spaces in irreducible ones, we use Hilbertian products.
\begin{defi} Let $(X_i,d_i)$ be a countable family of metric spaces with base points $x_i\in X_i$. The product $\prod_i^2X_i$ is defined to be the set of elements $y=(y_i)$ of the Cartesian product of $X_i$'s such that $\sum d(x_i,y_i)^2\langle\infty$ and the distance between $y=(y_i)$ and $z=(z_i)$ is defined by $d(y,z)^2=\sum d(y_i,z_i)^2$.  This metric space is called the \textit{Hilbertian product} of the spaces $X_i$.
\end{defi} 
This definition depends on the choice of base points but if each $X_i$ has a transitive group of isometries then the product $\prod_i^2X_i$ does not depend on this choice (up to isometry). Moreover, this product space is complete if and only if each $(X_i,d_i)$ is so.
\begin{rema}\label{bounded curvature}In general, there is no notion (in the category of Riemannian manifolds) of Hilbertian product of Riemannian manifolds. The sectional curvature at each point has to be bounded (the Riemann 4-tensor at each point is continuous \cite[Proposition IX.1.1]{MR1666820}  and thus the sectional curvature is bounded). For example, the Hilbertian product of hyperbolic plans of curvature $-n$  cannot be a Riemannian manifold such that each hyperbolic space embeds as a totally geodesic submanifold.
\end{rema}

Technics that we used in nonpositive curvature and nonnegative curvature are slightly different. In nonpositive curvature the Cartan-Hadamard theorem  simplifies the classification and we give this simpler proof even if the technics used in nonnegative curvature are more general.
\subsection{Nonpositive curvature}
%
%
%
%
\begin{defi}A Riemannian manifold $(M,g)$ has no Euclidean local de Rham factor if its universal cover cannot be decomposed as a product $\mathcal{H}\times N$ where $\mathcal{H}$ is an Hilbert space of positive dimension and $N$ is an other Riemannian manifold.
\end{defi}
\begin{theorem}\label{noncompact} Let $(M,g)$ be a separable Riemannian symmetric space with nonpositive curvature operator and no Euclidean local de Rham factor. Then $(M,g)$ is isometric to a Hilbertian product 
\[M\simeq{\prod_i}^2 M_i\]
where  each $M_i$ is an irreducible finite dimensional Riemannian symmetric  space of noncompact type or is homothetic to an element of the following list :
\[GL_\infty^2(\R)/O^2(\infty),\quad U^{*\ 2}(\infty)/Sp^2(\infty),\quad U^2(p,\infty)/U^2(p)\times U^2(\infty),
\quad O^2(p,\infty)/O^2(p)\times O^2(\infty)\]
\[O^{*\ 2}(\infty)/U^2(\infty),\quad Sp^2_\infty(\R)/U^2(\infty),\quad Sp^2(p,\infty)/Sp^2(p)\times Sp^2(\infty), \]
\[GL_\infty^2(\C)/U^2(\infty),\quad O^2_\infty(\C)/O^2(\infty),\quad Sp_\infty^2(\C)/Sp^2(\infty)\]
where $p\in\N\cup\{\infty\}$.
\end{theorem}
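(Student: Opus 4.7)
The plan is to transfer the classification of separable real L$^*$-algebras to the geometric side via the correspondence of Theorem \ref{existence} and the rigidity provided by Theorem \ref{localisomorphism}. First, I attach to $(M,g)$ a L$^*$-algebra $L=\mathfrak{k}\oplus\mathfrak{p}$ as in Theorem \ref{existence}. Since $M$ is separable, $\mathfrak{p}\cong T_pM$ is separable; as the Lie algebra generated by $\mathfrak{p}$ is dense in $L$, the algebra $L$ is itself separable. The noncompact type hypothesis says that $\mathfrak{p}$ contains no nontrivial abelian ideal of $L$, and in view of the standard L$^*$-decomposition of $L$ as the orthogonal sum of an abelian ideal and a semisimple ideal this forces $L$ to be semisimple.

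Next, I invoke the Schue/Balachandran/de la Harpe/Unsain classification to obtain an orthogonal Hilbertian decomposition of $L$ into simple L$^*$-algebras $L_i$, each either of finite-dimensional classical type or belonging to one of the three infinite simple families. A key intermediate point is that the Cartan involution $\theta$ whose $\pm 1$-eigenspaces are $\mathfrak{k}$ and $\mathfrak{p}$ permutes the simple ideals $L_i$: since $\theta$ is an isometric Lie automorphism of $L$ and the simple ideals are intrinsically defined, this is automatic. After grouping $\theta$-orbits into $\theta$-stable blocks, each block inherits a Cartan decomposition $L_i=\mathfrak{k}_i\oplus\mathfrak{p}_i$.

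For each pair $(L_i,\theta_i)$ I realize a simply connected separable Riemannian symmetric space $M_i$ of noncompact type whose associated L$^*$-algebra is $(L_i,\theta_i)$. In finite dimension this is Cartan's classical correspondence. In infinite dimension, the homogeneous spaces appearing in the statement are already known to be Riemannian symmetric spaces (cf.\ \cite{MR0476820}, \cite{MR2373944}); a direct computation matches their L$^*$-algebras with the three infinite simple families, the parameter $p\in\N\cup\{\infty\}$ tracking the signature of the admissible Cartan involutions. Forming the Hilbertian product $N={\prod}^2_i M_i$ then produces a symmetric space whose associated L$^*$-algebra is the Hilbertian sum of the $L_i$'s, matching $L$ together with its Cartan decomposition.

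By Theorem \ref{localisomorphism}, $M$ and $N$ are locally isomorphic. To globalize, I use Proposition \ref{simplyconnected}: $M$ is simply connected and complete CAT(0), and a Hilbertian product of simply connected complete CAT(0) spaces is again simply connected and complete CAT(0), so by the standard uniqueness of the complete simply connected development the local isometry extends to a global isometry $M\simeq N$. The main difficulty I anticipate is the identification step within each infinite family: one has to verify that the $\theta$-stable Cartan decompositions of each simple infinite-dimensional L$^*$-algebra are \emph{exactly} those coming from the ten model spaces listed, which is essentially an infinite-dimensional analogue of Cartan's classification of real forms, handled via signature invariants for a Hilbert--Schmidt analogue of the Killing form.
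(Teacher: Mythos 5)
Your overall strategy (decompose the associated L$^*$-algebra into simple ideals of noncompact type, realize model spaces, compare via Theorem \ref{localisomorphism}) follows the paper, but there is a genuine gap at the decisive step. You form the \emph{external} Hilbertian product $N={\prod_i}^2 M_i$ of the models and assert that it ``produces a symmetric space whose associated L$^*$-algebra is the Hilbertian sum of the $L_i$'s'', so that Theorem \ref{localisomorphism} can be applied to the pair $(M,N)$. But Theorem \ref{localisomorphism} is a statement about \emph{Riemannian} symmetric spaces, and an infinite Hilbertian product of Riemannian manifolds need not be a Riemannian manifold at all (Remark \ref{bounded curvature}); your factors are only determined up to homothety, and nothing in your argument controls the scaling factors, equips $N$ with a Riemannian structure, or defines its associated L$^*$-algebra in the sense of Theorem \ref{existence}. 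So the comparison of $M$ with $N$ is not licensed as stated. The paper's proof is organized precisely to avoid this: the factors are produced \emph{inside} $M$ as totally geodesic submanifolds $N_i=\exp_p(L_i\cap\mathfrak{p})$ via Proposition \ref{totgeod} (an ideal gives an integrable, parallel, totally geodesic distribution), Theorem \ref{isometry} is applied only to the finite Riemannian products $N_1\times\dots\times N_n\times N'_n$ --- where, by Proposition \ref{simplyconnected}, the exponential maps are diffeomorphisms, so the ``normal neighborhoods'' are the whole spaces and the isometry is global with no appeal to a developing-map argument --- and the passage to the infinite product is purely metric: $\exp_p\circ\varphi^{-1}\colon{\prod_i}^2N_i\to M$ is a homeomorphism which is isometric on the dense set of sequences with finitely many nonzero entries, hence an isometry.

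Two smaller points. Your ``grouping of $\theta$-orbits into $\theta$-stable blocks'' is vacuous here and, if taken seriously, would need to be ruled out: ideals of an L$^*$-algebra are $*$-invariant by definition and in the noncompact case the Cartan involution is $-*$ (cf.\ Lemma \ref{uninteresting}), so each simple summand is automatically $\theta$-stable, while a swapped pair $L_i\oplus L_j$ would force compact-type ideals, contradicting the hypothesis; your proposal leaves this unaddressed even though such blocks would produce spaces outside the stated list. Likewise, the ``main difficulty'' you postpone --- classifying the admissible Cartan involutions by signature invariants of a Hilbert--Schmidt Killing form --- is not what is needed: the classification of simple separable \emph{real} L$^*$-algebras already fixes $*$, hence the decomposition $\mathfrak{k}\oplus\mathfrak{p}$, and the paper identifies each algebra in the list with a concrete symmetric space by embedding it in $\mathfrak{gl}^2_\infty(\R)$ and taking $\exp(\mathfrak{p})$, a totally geodesic subspace of $GL^2_\infty(\R)/O^2(\infty)$ by \cite[Proposition III.4]{MR0476820}.
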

The elements of the previous list are hence the irreducible infinite dimensional Riemannian symmetric  spaces with nonpositive curvature operator. Their construction is described in Section \ref{constructionspace}.

\begin{rema}If $M$ is a simply-connected symmetric space with nonpositive curvature operator then $M$ is a product $\mathcal{H}\times M'$ where $\mathcal{H}$ is a Hilbert space and $M'$ is a Riemannian symmetric space with nonpositive curvature operator and no  Euclidean local de Rham factor. The simply-connectedness allows us to avoid Riemannian symmetric spaces with vanishing sectional cur
like flat torus.
\end{rema}

The \textit{rank} of a metric space is the supremum of dimensions of Euclidean spaces isometrically embedded. The paper \cite{MR3044451} was focused on some irreducible infinite dimensional Riemannian symmetric spaces of nonpositive sectional curvature with finite rank. For brevity, the following notation was used in \cite{MR3044451} : $X_p(\K)$ ($p\in\N$) denotes the symmetric space $O^2(p,\infty)/O^2(p)\times O^2(\infty)$, $U^2(p,\infty)/U^2(p)\times U^2(\infty)$ or $Sp^2(p,\infty)/Sp^2(p)\times Sp^2(\infty)$ depending on wether $\K$ is the field of real, complex or quaternionic numbers. Actually, these spaces are the only irreducible ones to have infinite dimension and finite rank.
\begin{coro}\label{finite rank} Let $(M,g)$ be a separable Riemannian symmetric space with nonpositive curvature operator and no Euclidean local de Rham factor. The rank of $M$ is equal to its telescopic dimension. Moreover, if it is finite then 
\[M\simeq\prod_{i=1}^kM_i\]
where $M_i$ is an irreducible finite dimensional Riemannian symmetric space of noncompact type or is homothetic to some $X_p(\K)$.
\end{coro}
The telescopic dimension of a CAT(0) space is a notion of dimension at large scale introduced in \cite{MR2558883}.\\

We conclude this section with an example of  a space which is symmetric and has nonpositive curvature but which is not a Riemannian symmetric space. This is a purely infinite dimensional phenomenon. Let $(X,d)$ be a metric space. We say that $X$ is a \textit{CAT(0) symmetric space} if it is a complete CAT(0) space such that for any point $x\in X$, there exists an involutive isometry $\sigma_x$ with unique fixed point $x$. Observe that this condition implies that $x$ is the midpoint of $y$ and $\sigma_x(y)$ for any $y\in X$. In finite dimension,  \cite[Theorem 1.1]{MR2574740} implies that any proper CAT(0) symmetric space is the product of a Euclidean space and a Riemannian symmetric space of noncompact type (and finite dimension). This theorem uses the solution to Hilbert’s fifth problem and local compactness is crucial.

Let  $\mathbb{H}$ be the hyperbolic plane with sectional curvature -1  and let $o$ be a point in $\mathbb{H}$. We set L$^2([0,1],\mathbb{H})$ to be the space of measurable maps $f\colon[0,1]\to\mathbb{H}$ such that $\int d(f(t),o)^2\mathrm{d}t\langle\infty$. This space is a CAT(0) symmetric space but not a Riemannian manifold, see Section \ref{counterexample}.
\subsection{Nonnegative curvature}
In the case of nonnegative curvature,  some more technicalities appear. The first one is the lack of automatic simply-connectedness and the second one is the fact that the exponential map is not necessarily a diffeomorphism. Under the assumption of simply-connectedness, we obtain the following theorem. 
\begin{theorem}\label{nonnegativebis}Let $(M,g)$ be a simply-connected separable Riemannian symmetric space with nonnegative curvature operator then $(M,g)$ is isometric to a Hilbertian product 
\[M\simeq\mathcal{H}\times{\prod_i}^2 M_i\]
where $\mathcal{H}$ is a Hilbert space and each $M_i$ is a simply-connected irreducible Riemannian symmetric space. Each $M_i$ can be a finite dimensional Riemannian symmetric space of compact type or is homothetic to an element of the following list.

\[\widetilde{U^2(\infty)/SO^2(\infty)},\quad \widetilde{U^{ 2}(\infty)/Sp^2(\infty)},\quad U^2(p+\infty)/U^2(p)\times U^2(\infty),
\quad SO^2(p+\infty)/SO^2(p)\times SO^2(\infty)\]
\[SO^{ 2}(\infty)/U^2(\infty),\quad Sp^2(\infty)/U^2(\infty),\quad Sp^2(p+\infty)/Sp^2(p)\times Sp^2(\infty), \]
\[\widetilde{U^2(\infty)},\quad \widetilde{SO^2(\infty)},\quad Sp^2(\infty)\]
where $p\in\N\cup\{\infty\}$.
\end{theorem}
\subsection{Comments} 
W. Kaup obtained a classification of Hermitian symmetric spaces in \cite{MR639580,MR690007}.  His work uses the so-called Jordan-Hilbert algebras (Jordan algebras with a compatible structure of Hilbert space and an adjoint map $X\mapsto X^*$). His technics seem difficult to adapt to the real case. The paper \cite{MR2526791} shows a description in terms of L$^*$-algebras of the irreducible Hermitian symmetric spaces. The approach of symmetric space of W. Kaup is closer to the one of O. Loos  than the one of \'E. Cartan. Generalizations of Loos' approach to symmetric spaces can be found in \cite{MR1809879} and in \cite{Klotz:2009qy,MR1950888,MR776786} for Banach symmetric spaces.\\

\textbf{Acknowledgments.} The author thanks Pierre de la Harpe for useful comments on a previous version of this article, Wolfgang Bertram for pointing out interesting and relevant references and Julien Maubon for illuminating discussions. Moreover, the referee did a great job which helped the author to correct some mistakes and improve readability. It is a pleasure to thank him.

\section{L$^*$-algebras}\label{L$^*$-algebras}
\subsection{Definitions}\label{subb}
\begin{defi}A L$^*$-\textit{algebra} is a Lie algebra with a structure of (complex or real) Hilbert space such that there is a map $x\mapsto x^*$ satisfying, for all $x,y,z$,  the equation
\begin{equation}\label{L^*}
\langle[x,y],z\rangle=\langle y,[x^*,z]\rangle.
\end{equation}
\end{defi}
An \emph{ideal} of a L*-algebra $L$ is an ideal of the underlying Lie algebra which is moreover closed and $*$-invariant. Observe that an ideal of L*-algebra is a L*-algebra on its own. A L$^*$-algebra $L$ is \textit{semisimple} if $\overline{[L,L]}=L$ and it is \textit{simple} if it has no nontrivial ideal. A L$^*$-algebra is \textit{compact} if it is semisimple and $x^*=-x$ for all $x$. A L$^*$-algebra is \textit{noncompact} if it is semisimple and has no nontrivial compact ideal. An \textit{isomorphism} between L$^*$-algebra is an isomorphism of Lie algebras that is also an isometry and intertwines the involutions.
\begin{exem}Let $\mathcal{H}$ be a separable Hilbert space over $\K=\R,\C$ or the field of quaternions. The Lie algebra of Hilbert-Schmidt operators endowed with the involution given by the adjoint and the Hilbert structure coming from the Hilbert-Schmidt scalar product is a L$^*$-algebra, which we denote by $\mathfrak{gl}^2_\infty(\K)$. A choice of a Hilbert base for $\mathcal{H}$ provides embeddings of the algebras of operators $\mathfrak{gl}_n(\K)$ on $\K^n$, into  $\mathfrak{gl}^2_\infty(\K)$ such that their increasing union is dense. The other examples of separable simple L$^*$-algebras are constructed in a similar way.
\end{exem}

\begin{exem}\label{fintitedimension}Let $\mathfrak{g}$ be a semisimple real Lie algebra of finite dimension. Let $\mathfrak{g}=\mathfrak{k}\oplus\mathfrak{p}$ be a Cartan decomposition of $\mathfrak{g}$. The Killing form $B$ of $\mathfrak{g}$ is negative definite on $\mathfrak{k}$ and positive definite on $\mathfrak{p}$. Moreover for any $X,Y,Z$, we have $B([X,Y],Z)=-B(Y,[X,Z])$. Hence, if we define $(K+P)^*=-K+P$ (with $K\in\mathfrak{k}$ and $P\in\mathfrak{p}$) and $\langle X,Y\rangle=B(X,Y^*)$  then $(\mathfrak{g},\langle\ ,\ \rangle)$ is a L$^*$-algebra. Actually, the map $X\mapsto X^*$ is just the opposite of the Cartan involution.

\end{exem}
 
 Any separable L$^*$-algebra can be written as the direct sum of an Abelian ideal and a Hilbertian sum (as described below) of simple ideals. Moreover, simple L$^*$-algebras have been classified in the complex and real cases (see \cite{MR0117575,MR0133408,MR0347920,MR0282218,MR0325721}). The simple separable infinite dimensional real L$^*$-algebras which are compact and noncompact are recalled respectively in Table \ref{listec} and \ref{liste}.

\begin{table}[h]
\begin{center}
\begin{tabular}{|c|l|}
\hline
Type& Algebra\\
\hline
A&$\mathfrak{u}^2(\infty)$\\
BD&$\mathfrak{o}^2(\infty)$\\
C&$\mathfrak{sp}^2(\infty)$\\
\hline
\end{tabular}\caption{List of simple compact L$^*$-algebras}\label{listec}
\end{center}
\end{table}

\begin{table}
\begin{center}
\begin{tabular}{|c|l|}
\hline
Type& Algebra\\
\hline
A I&$\mathfrak{gl}_\infty^2(\R)$\\
A II&$\mathfrak{u}_\infty^{*\ 2}(\C)$\\
A III&$\mathfrak{u}^2(p,\infty),\ p\in\N^*\cup\{\infty\}$\\
BD I&$\mathfrak{o}^2(p,\infty),\ p\in\N^*\cup\{\infty\}$\\
BD III&$\mathfrak{o}^{*\ 2}(\infty)$\\
C  I&$\mathfrak{sp}_\infty^2(\R)$\\
C II&$\mathfrak{sp}^2(p,\infty),\ p\in\N^*\cup\{\infty\}$\\
\hline
A&$\mathfrak{gl}_\infty^2(\C)$\\
BD&$\mathfrak{o}_\infty^2(\C)$\\
C&$\mathfrak{sp}_\infty^2(\C)$\\
\hline
\end{tabular}\caption{List of simple noncompact L$^*$-algebras}\label{liste}
\end{center}
\end{table}

The last three algebras in Table \ref{liste} are moreover complex simple L$^*$-algebras. The notations used here are maybe not standard but we hope the correspondence with notations used in \cite{MR0282218} or \cite{MR0325721} is transparent. They are chosen to be brief and close to the ones used in finite dimension \cite[Tables IV and V, X.6]{MR1834454}. We refer to the previous references for a description of these algebras.

Let $\{\mathcal{H}_i\}$ be a countable family of separable (real, complex or quaternionic) Hilbert spaces. The \textit{Hilbertian sum} of this family, which we will denote by $\oplus^2\mathcal{H}_i$, is the set of sequences $v=(v_i)$ such that $\sum_i||v_i||^2$ is finite (see \cite[V.2.1]{MR910295}). Endowed with  the inner product $\langle u,v\rangle=\sum_i\langle u_i,v_i\rangle$, the space $\oplus^2\mathcal{H}_i$ is also a separable Hilbert space.


\begin{prop}Let $(L_i)$ be a countable family of semisimple $L^*$-algebras such that there exists $C\geq0$ with $||\mathrm{ad}(x)||\leq C||x||$ for all $i$ and all $x\in L_i$. For $x=(x_i),y=(y_i)\in\oplus^2L_i$, set $[x,y]=([x_i,y_i])$ and $x^*=(x_i^*)$. Endowed with this structure, the Hilbertian sum $\oplus^2L_i$ is a L$^*$-algebra.
\end{prop}
\begin{proof} Let $(x_i)\in \oplus^2L_i$ and $y=(y_i)\in\oplus^2L_i$ then $[x,y]=\sum[x_i,y_i]$ is an element of $\oplus^2L_i$ since $||[x,y]||^2\leq\sum C^2||x_i||^2||y_i||^2\leq C^2||x||^2||y||^2$. This also shows that ad$(x)$ is a linear bounded operator and the Lie bracket is also continuous. Continuity arguments show that $\oplus^2L_i$ is a Lie algebra and for all $x\in\oplus^2L_i$, ad$(x)^*=$ad$(x^*)$. Since $L_i$ is semisimple, the equation (\ref{L^*}) implies that $\langle u_i,v_i^*\rangle=\langle v_i,u_i^*\rangle$ for all $u_i\in L_i$ and $v_i\in [L_i,L_i]$ (see \cite[Preliminaries]{MR0117575}). Since $\overline{[L_i,L_i]}=L_i$, we have $||u_i^*||=||u_i||$ for any $u_i\in L_i$. Finally, $(x_i^*)\in\oplus^2 L_i$. 
\end{proof}
\begin{rema} In the preliminaries of \cite{MR0117575}, R. Schue wrote : ``The Hilbert space direct sum of $L^*$-algebras defines an $L^*$-algebra in the obvious way". Actually, the condition on the uniform bound of operators ad$(x)$ is necessary.
\end{rema}
%
%
%
\subsection{Orthogonal symmetric L*-algebras}
Orthogonal symmetric Lie algebras of finite dimension play an important role in the theory of finite dimensional Riemannian symmetric spaces. We give the following definition in the context of semisimple L$^*$-algebras. 
\begin{defi}\label{osla}An \textit{orthogonal symmetric} L$^*$-\textit{algebra} is a pair $(L,s)$ where
\begin{enumerate}[(i)]
\item $L$ is a real $L^*$-algebra, 
\item $s$ is an involutive isometric automorphism of the L$^*$-algebra $L$, 
\item For all $X\in L$ such that $s(X)=X$, $X^*=-X$.
\end{enumerate}
A symmetric orthogonal L$^*$-algebra $(L,s)$, is called \textit{irreducible} if it has no $s$-invariant ideal.

\end{defi}
In finite dimension, there is a duality between orthogonal symmetric Lie algebras of compact type and orthogonal symmetric Lie algebras of noncompact type (see, e.g., \cite[Section V.2]{MR1834454}). This duality extends to the context of L$^*$-algebras.

Let $(L,s)$ be a symmetric orthogonal L$^*$-algebra and let $\tilde{L}$ be its complexification as L$^*$-algebra \cite[§1.1]{MR0325721}. In particular, the extension of the map $X\mapsto X^*$ is conjugate linear. The automorphism $s$ extends linearly to a L$^*$-automorphism of $\tilde{L}$. Let $L=\mathfrak{k}\oplus\mathfrak{p}$ be the decomposition of $L$ into $+1$ and $-1$ eigenspaces of $s$. 
\begin{defi} The real L$^*$-algebra $L'= \mathfrak{k}\oplus i\mathfrak{p}$ endowed with the restriction $s'$ of $s$ on $L'$ is called the dual of $(L,s)$.
\end{defi}

\begin{lemm}\label{uninteresting}Let $(L,s)$ be a symmetric orthogonal L$^*$-algebra. Then :
\begin{enumerate}
\item The pair $(L',s')$ is an orthogonal symmetric L$^*$-algebra.
\item The pair $((L')',(s')')$ is isomorphic to $(L,s)$ as symmetric orthogonal L$^*$-algebra.
\item Assume that $L$ is a simple L$^*$-algebra. Then $L$ is compact if and only if $L'$ is noncompact.
\end{enumerate}
\end{lemm}

\begin{proof}The vector space $L'$ is a real L$^*$-subalgebra of $\tilde{L}$ which invariant under the extension of $s$ to $\tilde{L}$. Thus $s'$ is an involutive isometric automorphism of $L'$ and $L'=\mathfrak{k}\oplus i\mathfrak{p}$ is the decomposition of $L'$ into $+1$ and $-1$ eigenspaces of $s'$. In particular, for any $X\in\mathfrak{k}$, $X^*=-X$.

The isomorphism between $(L')'$ and $L$ comes from the identification of the decompositions $\tilde{L}=(\mathfrak{k}\oplus\mathfrak{p})\oplus i(\mathfrak{k}\oplus\mathfrak{p})$ and $\tilde{L'}=(\mathfrak{k}\oplus i\mathfrak{p})\oplus i(\mathfrak{k}\oplus i\mathfrak{p})$ .

If $L$ is simple and compact then for any $X\in \mathfrak{p}$, $(iX)^*=-iX^*=iX$ and thus $L'$ is noncompact. Conversely, assume that $L$ is simple and noncompact. Let $L=\mathfrak{k'}\oplus\mathfrak{p'}$ be the decomposition of $L$ into -1 and +1 eigenspaces of $*$ and let $L=\mathfrak{k}\oplus\mathfrak{p}$ be its decomposition into +1 and -1 eigenspaces of $s$. By assumption, we know that $\mathfrak{k}\subseteq\mathfrak{k}'$ and thus $\mathfrak{p}'\subseteq\mathfrak{p}$. Observe that $[\mathfrak{p},\mathfrak{p}]\subseteq\mathfrak{k}$ and that $\overline{[\mathfrak{p}',\mathfrak{p}']}=\mathfrak{k}'$.  Thus, $\mathfrak{k}'=\overline{[\mathfrak{p}',\mathfrak{p}']}\subseteq\overline{[\mathfrak{p},\mathfrak{p}]}\subseteq\mathfrak{k}$. This shows that $X*=-s(X)$ for any $X\in L$. Now, for $X$ in $\mathfrak{p}$, $(iX)^*=-iX^*=is(X)=-iX$ and $L'$ is compact.
\end{proof}

\begin{prop}\label{compact}Let $(L,s)$ be a separable compact orthogonal symmetric L$^*$-algebra. Let $L=\oplus^2 L_i$ be the decomposition of $L$ into simple ideals then $s$ permutes the $L_i$'s. The algebra $L$ is the Hilbertian sum of irreducible orthogonal symmetric L$^*$-algebras $I_k$. Each  $I_k$ is equal to some $s$-invariant simple ideal or $I_k=L_i\oplus L_j$ with $s(L_i)=L_j$ for some $L_i$ and $L_j$.

If $I_k=L_i\oplus L_j$ with $s(L_i)=L_j$ then $L_i$ is isomorphic to $L_j$ which is isomorphic to $\mathfrak{o}^2(\infty),\mathfrak{u}^2(\infty)$ or $\mathfrak{sp}^2(\infty)$. The decomposition $I_k=\mathfrak{k}\oplus\mathfrak{p}$ into $+1$ and $-1$ eigenspaces of $s$ is given by $\mathfrak{k}=\{X+s(X);\ X\in L_i\}$ and $\mathfrak{p}=\{X-s(X);\ X\in L_i\}$.

Assume $L_i$ is $s$-invariant. If we decompose $L_i=\mathfrak{k}\oplus\mathfrak{p}$ into $+1$ and $-1$ eigenspaces of $s$ then $L_i$ is isomorphic to one orthogonal symmetric L$^*$-\textit{algebra} of Table \ref{compactorthogonal}.
\end{prop}
\begin{table}
\begin{center}
\begin{tabular}{|c|c|l|}
\hline
Type&L$^*$-algebra&$\mathfrak{k}$\\
\hline
AI&$\mathfrak{u}^2(\infty)$&$\mathfrak{o}^2(\infty)$\\
AII&$\mathfrak{u}^2(\infty)$&$\mathfrak{sp}^2(\infty)$\\
AIII&$\mathfrak{u}^2(p+\infty)$&$\mathfrak{u}^2(p)\times\mathfrak{u}^2(\infty),\ p\in\N^*\cup\{\infty\}$\\
BDI&$\mathfrak{o}^2(p+\infty)$&$\mathfrak{o}^2(p)\times\mathfrak{o}^2(\infty),\ p\in\N^*\cup\{\infty\}$\\
BDIII&$\mathfrak{o}^2(\infty)$&$\mathfrak{u}^2(\infty)$\\
CI&$\mathfrak{sp}^2(\infty)$&$\mathfrak{u}^2(\infty)$\\
CII&$\mathfrak{sp}^2(p+\infty)$&$\mathfrak{sp}^2(p)\times\mathfrak{sp}^2(\infty),\ p\in\N^*\cup\{\infty\}$\\
\hline
\end{tabular}
\end{center}
\caption{List of compact simple orthogonal symmetric L$^*$-algebras.}\label{compactorthogonal}
\end{table}
\begin{rema}The description of simple compact orthogonal symmetric L$^*$-algebras in Table  \ref{compactorthogonal} has the advantage to be brief but it is not explicit. The subalgebra $\mathfrak{k}$ is given up to isomorphism but the embedding in $L_i$ and the involution are not given. An explicit description can be obtained in the proof of Proposition \ref{compact}, that is obtained as the dual of some noncompact simple L$^*$-algebra.

\end{rema}
\begin{proof}[Proof of Proposition \ref{compact}] Since $s$ is L$^*$-automorphism, the image of a simple ideal is also a simple ideal. The decomposition $L=\oplus^2 L_i$ is unique up to permutation. Therefore, for any $i$ there is $j$ such that $s(L_i)=L_j$.

Now is suffices to understand involutive L$^*$-automorphisms of compact simple L$^*$-algebras. Let $L_0$ be a  compact simple L$^*$-algebra with an involutive L$^*$-automorphism $s$. We decompose $L_0=\mathfrak{k}\oplus\mathfrak{p}$  into $\pm1$ eigenspaces of $s$. Let $\tilde{L}$ be the complexification of $L_0$. Since $L_0$ is compact, $L_0$ has no complex structure and thus (\cite[Theorem 1.3.1]{MR0325721}) $\tilde{L}$ is simple. Let $L$ be the real form of $\tilde{L}$ associated to $s$ (extended to $\tilde{L}$) (see \textit{loc. cit}). Since $L_0$ is compact, we know that $L=\mathfrak{k}\oplus i\mathfrak{p}$, that is the dual of $L_0$. The L$^*$-algebra $L$ is a noncompact simple L$^*$-algebra of and thus is one of those described in section \ref{subb} or more precisely in \cite[Section 5]{MR0325721}. Thanks to Lemma \ref{uninteresting}, $L_0$ is the dual of $L$ with its unique possible structure of orthogonal symmetric L$^*$-algebra.
\end{proof}

\section{Construction of a L$^*$-algebra}
\subsection{Riemannian symmetric spaces}
A \textit{Riemannian manifold} is a pair $(M,g)$ such that $M$ is a connected smooth manifold modeled on a real  Hilbert space and $g$ is a smooth Riemannian metric on $M$. Our standard reference for these manifolds  is  \cite{MR1666820} and in particular, we will adopt the same convention for the sign of the Riemann 4-tensor, which is also the sign used in \cite{MR1834454} for example, but is opposite to the one used in \cite{MR1330918}. With this convention, for two orthogonal unitary vectors $u,v$ of a tangent space $T_pM$, the sectional curvature is $Sec(u,v)=-R(u,v,u,v)$ where $R$ is the Riemann 4-tensor. This convention will also explain the minus sign which appears in the definition of the curvature operator.

\begin{defi}A \textit{Riemannian symmetric space} is a Riemannian manifold such that at each point $p\in M$, there is an isometry, $\sigma_p$ which leaves $p$ fixed and satisfies $d_p\sigma_p=-$Id.
\end{defi}

\begin{rema}The definition of symmetric spaces given in \cite[XIII,\S5]{MR1666820} is not the same as ours since Lang assumes that the exponential map is everywhere surjective. Neeb observed \cite[Remark 3.8]{MR1950888} that this additional property is unnecessary to use results of  \cite[XIII]{MR1666820}, on which we rely.
\end{rema}

We collect some remarks about metric completeness and geodesic completeness. In finite dimension, these two notions of completeness are equivalent thanks to Hopf-Rinow theorem. Moreover, in finite dimension, any of this two conditions implies the existence of a path of minimal length between two points. In general, a Riemannian manifold which is metrically complete is also geodesically complete but the converse is false (see\cite{MR0400283}).  Furthermore, J.H. McAlpin \cite{MR2614999} constructed a metrically complete Riemannian manifold  such that there are two points which are not joined by a path of minimal length (see \cite[Remark p.226]{MR1666820}).

\begin{lemm}\label{homogeneous} The isometry group of a Riemannian symmetric space $M$ acts transitively on $M$.
\end{lemm}

\begin{proof}Let  $x,y\in M$. There are points $x_0=x,x_1,\dots,x_n=y$ such that $x_{i-1}$ and $x_{i}$ are joined by a geodesic segment. Let $m_i$ be the midpoint of that segment. Now the isometry $\sigma_{m_{n}}\circ\dots\circ\sigma_{m_1}$ sends $x$ to $y$.
\end{proof}

In the case of a Riemannian symmetric space, metric completeness is a consequence of homogeneity and the existence of a closed ball that is complete and geodesic completeness is  proved in \cite[Proposition XIII.5.2]{MR1666820}.

If the sectional curvature is nonpositive then metric completeness is equivalent to geodesic completeness \cite[Corollary IX.3.9]{MR1666820}. This a consequence of a version of Cartan-Hadamard theorem due to J.H. McAlpin \cite{MR2614999} (see also \cite{MR1950888} for Banach manifolds). Since any Riemannian symmetric space is geodesically complete, this version of Cartan-Hadamard theorem \cite[Theorem IX.3.8]{MR1666820} implies also that the exponential map at any point is surjective.

In finite dimension, the condition of existence of a  local symmetry is equivalent to the parallelism of the Riemann tensor. The  same holds in infinite dimension.

\begin{defi} A Riemannian manifold $M$ is said to be locally symmetric if for any $p\in M$, there exists a ball $B$ around $p$ and an isometry $\sigma_p$ of this ball such that $d_p\sigma_p=-$Id.
\end{defi}

\begin{prop} A Riemannian manifold $M$ is locally symmetric if and only if $\nabla R=0$.
\end{prop}

\begin{proof}The proof of this fact in finite dimension (see e.g. \cite[IV.1]{MR1834454}) works as well in infinite dimension. The fact that a Riemannian locally symmetric space has parallel Riemann tensor is \cite[Proposition XIII.6.2]{MR1666820} and the converse relies on Cartan's theorem.
\end{proof}
Let us introduce some notations before stating Cartan's theorem. Let $(M,g)$ and $(M',g')$ be Riemannian manifolds modelled on the same Hilbert space. If $c$ is a geodesic curve $c\colon[a,b]\to M$ we denote by $\dot{c}(t)$ the tangent vector at $c(t)$ and $P_{a,c}^b$ the parallel transport along $c$. Parallelism of the Riemann tensor can be expressed with the following relation \cite[XIII, \S6]{MR1666820}
\begin{equation}\label{eqpara}
P_{a,c}^b\circ R_{\dot{c}(a)}=R_{\dot{c}(b)}\circ P_{a,c}^b.
\end{equation}
Let $p\in M$, $p'\in M'$ and $r>0$ be such that $B(p,r)$ and $B(p',r)$ are normal balls. Let $i_p\colon T_pM\to T_{p'}M'$ be an isometry.
We define $\Phi=\exp_{p'}\circ i_p\circ\exp_p^{-1}\colon B(p,r)\to B(p',r)$. Now, let $c$ be a radial geodesic with unit speed starting at $p$ and let $c'$ be its image by $\Phi$. For $0\leq t<r$ we set $i_t=P_{0,c'}^t \circ i_p \circ P_{t,c}^0$.

\begin{theorem}[Cartan's theorem {\cite[Theorem 1.12.8]{MR1330918}}]\label{cartantheorem}Assume that for all radial geodesics $c(t)$ and their images $c'(t)=\Phi\circ c(t)$ we have
\[i_t\circ R_{\dot{c}(t)}=R'_{\dot{c}'(t)}\circ i_t.\]
Then $\Phi$ is an isometry.
\end{theorem}

\begin{prop}\label{global} Let $M,M'$ be Riemannian symmetric spaces. Assume $M$ is simply-connected. Any local isometry from an open set of $M$ to $M'$ can be uniquely extended to an isometric covering map from $M$ to $M'$. If moreover $M'$ is simply-connected as well then this covering map is actually an isometry.\end{prop}

\begin{proof}  Recall that a ball $B(p,r)$ in a Riemaniann manifold $M$ is normal if the exponential map at $p$ realizes a diffeomorphism from the ball of radius $r$ in $T_p M$ onto $B(p,r)$. Since $M, M'$ are symmetric and thus homogeneous (Lemma \ref{homogeneous}), there is $r_0>0$ such that for any $x\in M$ and $x'\in M'$, $B(x,r_0)$ and $B(x',r_0)$ are normal balls. Let $x\in M$, $x'\in M'$ and $r>0$ such that $B(x,r)$ and $B(x',r)$ are isometric normal balls. Let us call $\phi$ this isometry. We aim to extend $\phi$ to $M$. Let $y\in M$ and $\gamma\colon[0,1]\to M$ be a continuous path such that $\gamma(0)=x$ and $\gamma(1)=y$. Choose a increasing sequence $t_0=0,t_1,\dots,t_n=1$ such that $\cup B(x_i,r/2)$ contains $\gamma([0,1])$ and $d(x_i,x_{i+1})<r$ where $x_i=\gamma(t_i)$. We show that $\phi$ is extendable along $\gamma$. See \cite[I.\S11]{MR1834454} for details about extendable isometries in finite dimension which works in infinite dimension as well. Let us denote by $\phi_i$ the restriction of $\phi$ on $B(x,r/2)$. Assume $\phi_i$ has been defined on $B(x_i,r/2)$ being an immediate continuation of $\phi_{i-1}$, it has at most one isometric extension on $B(x_i,r)$, which is given by $\exp_{\phi(x_i)}\circ d_{x_i}\phi\circ\exp_{x_i}^{-1}$. The fact that this extension is an isometry follows from Theorem \ref{cartantheorem} and the following computation based on Relation \eqref{eqpara}.
\begin{align*}
i_t\circ R_{\dot{c}(t)}&=P_{0,c'}^t \circ i_p \circ P_{t,c}^0\circ R_{\dot{c}(t)}\\
&=P_{0,c'}^t \circ i_p \circ R_{\dot{c}(0)}\circ P_{t,c}^0\\
&=P_{0,c'}^t \circ R_{\dot{c}'(0)}\circ i_p\circ P_{t,c}^0\\
&=R_{\dot{c}'(t)}\circ i_t\ .
\end{align*}
In particular, this extension of $\phi_i$ is well defined on a neighborhood of $x_{i+1}$ and the previous formula mutatis mutandis allows us to define an immediate continuation $\phi_{i+1}$ of $\phi_i$ on $B(x_{i+1},r/2)$.

A small continuous deformation of $\gamma$ remains in $\cup B(x_i,r/2)$ and thus the continuation of $\phi$ along such a small deformation gives same value to $y$ with same differential at $y$. This is the so-called \emph{monodromy theorem for isometries} \cite{MR0052177}. Now, since $M$ is simply-connected, one can extend $\phi$ to an isometric map $M\to M'$. Observe that for any $y'\in \phi(M)$ and $y\in M$ such that $\phi(y)=y'$ then $B(y,r)$ is isometricaly map onto $B(y',r)$ by $\phi$ by construction. In particular,  $B(y',r)\subset \phi(M)$. This shows that $\phi(M)$ is clopen and since $M'$ is connected, $\phi(M)=M'$.  The last statement of the proposition follows from the universal property of the universal cover.
\end{proof}

\begin{prop}\label{universal}The universal cover of a Riemannian symmetric space is a Riemannian symmetric space.
\end{prop}

\begin{proof}Let $M$ be a Riemannian symmetric space and let $\widetilde{M}$ be its universal cover. We endow $\widetilde{M}$ with the Riemannian structure coming from $M$. Since $M$ is homogeneous under this action of its isometry group, there exists $\varepsilon>0$ such that $B(p,\varepsilon)$ is normal neighborhood for any $p\in M$ and for any $p\in\widetilde{M}$, the projection $\pi\colon\widetilde{M}\to M$ induces an isometry  from $B(p,\varepsilon)$ to $B(\pi(p),\varepsilon)$ for any $p\in\widetilde{M}$.

Choose $p\in\widetilde{M}$ there exists an isometry $\sigma_0$ of $B(p,\varepsilon)$ fixing $p$ such that $d_p\sigma_0=-id$. We want to extent $\sigma_0$ to $\widetilde{M}$. Since $\widetilde{M}$ is simply-connected, it suffices to prove that $\sigma_0$ can extended along any continuous path starting at $p$, $\gamma\colon[0,1]\to\widetilde{M}$. By compactness of $\gamma([0,1])$, we can choose points $p_0=p,p_1,\dots,p_n$ on $\gamma([0,1])$ such that $\gamma([0,1])\subset\cup_i B(p_i,\varepsilon)$ and $d(p_i,p_{i+1})\langle\varepsilon/2$ for any $0\leq i\leq n$. Assume there is an isometric extension $\sigma_i\colon\cup_{j=0}^i B(p_j,\varepsilon)$ of $\sigma_0$ then the restriction $\sigma_i$ to $B(p_{i+1},\varepsilon/2)$ is an isometry which can be extended to $B(p_{i+1},\varepsilon)$ thanks to Theorem \ref{cartantheorem}. This isometry coincides with $\sigma_i$ on $\cup_{j=0}^n B(p_j,\varepsilon)\cap B(p_{i+1},\varepsilon)$ and  thus there is an extension $\sigma_{i+1}$ of $\sigma_0$ on $\cup_{j=0}^{i+1} B(p_j,\varepsilon)$.

The monodromy theorem for isometries and simply-connectedness show that there is a well-defined isometry $\sigma\colon\widetilde{M}\to\widetilde{M}$ fixing $p$ and satisfying $d_p\sigma=-id$. \end{proof} 

\subsection{Reminiscence of a Killing form}\label{construction}
For the remainder of this section $(M,g)$ will be a simply-connected Riemannian symmetric space. A \textit{Killing field} on $M$ is a smooth vector field such that its flow is realized by isometries (metric Killing vector field in the terms of \cite{MR1666820}). Let $\mathfrak{g}$ be the Lie algebra of Killing fields of $M$ and let $p$ be a point in $M$. The Lie algebra $\mathfrak{g}$  has a direct decomposition $\mathfrak{g}=\mathfrak{q}\oplus\mathfrak{p}$ where $\mathfrak{p}$ identifies with $T_pM$ under the map $X\mapsto X(p)$ and $\mathfrak{q}$ is the kernel of this map (see \cite[Theorem XIII.5.8]{MR1666820}). Moreover, we have the  following relations (see \cite[Theorem XIII.4.4]{MR1666820})
\begin{align*}
[\mathfrak{q},\mathfrak{q}]&\subseteq \mathfrak{q}\\
[\mathfrak{p},\mathfrak{p}]&\subseteq \mathfrak{q}\\
[\mathfrak{q},\mathfrak{p}]&\subseteq \mathfrak{p}.
\end{align*}
The Riemann 4-tensor has a particular expression (see \cite[Theorem XIII.4.6]{MR1666820}) in this case : for any $X,Y,Z,T\in T_pM\simeq\mathfrak{p}$, \begin{equation}\label{tensor}
R(X,Y,Z,T)=g([Z,[X,Y]],T).
\end{equation}

\begin{rema}
In the particular case of a finite dimensional irreducible symmetric space, the metric on the tangent space is a multiple of the Killing form $B$ of the group of isometries and thus
\begin{equation}\label{Killing}
R(X,Y,Z,T)=\lambda B([X,Y],[Z,T]),\ \lambda\in\mathbb{R}^*.
\end{equation}
\end{rema}

 In finite or infinite dimension, the symmetries of $R$ allows us to define a symmetric bilinear form on the alternating algebraic tensor product $\Wedge\mathfrak{p}$ by \[(X\wedge Y,Z\wedge T)=R(X,Y,Z,T).\] The space $\Wedge\mathfrak{p}$ has also a structure of preHilbert space defined by 
\[ \langle X\wedge Y,Z\wedge T\rangle_g=\det\left[\begin{array}{cc}
g(X,Z)&g(X,T)\\
g(Y,Z)&g(Y,T)
\end{array}\right].\]
With these notations, the sectional curvature of two vectors $X,Y\in T_pM$ is 
\[\mathrm{Sec}(X,Y)=-\frac{(X\wedge Y,X\wedge Y)}{\langle X\wedge Y,X\wedge Y\rangle_g}.\]
The vector space $\Wedge\mathfrak{p}$ can be naturally identified with the space of finite rank and skew-symmetric operators of $\mathfrak{p}$. The tensor $X\wedge Y=X\otimes Y-Y\otimes X$ is identified with the operator $Z\mapsto \langle X,Z\rangle Y-\langle Y,Z\rangle X$. This identification is actually an isometry when the space of finite rank operators is seen as a subspace of Hilbert-Schmidt operators with the Hilbert-Schmidt norm (up to a factor $\sqrt{2}$). For a bounded operator $A$ and a finite rank operator $B$ on $\mathfrak{p}$, we define \[\langle A,B\rangle_g=\trace(^tAB).\] For example, if $A$ is a bounded operator and $X,Y\in\mathfrak{p}$ then $\langle A,X\wedge Y\rangle_g=g(AX,Y)-g(X,AY)$. 

In finite dimension (see, e.g., \cite[Section 2.2]{MR2243772} or  \cite[§4]{MR0454884}), $\langle,\rangle_g$ is simply the Hilbert-Schmidt scalar product on L($\mathfrak{p}$) (where L$(\mathfrak{p})$ is the space of linear bounded operators on $\mathfrak{p}$) and thus there is a symmetric  operator $C$ of $\Wedge\mathfrak{p}$  such that \[(X\wedge Y, Z\wedge T)=-\langle C(X\wedge Y),Z\wedge T\rangle_g\] for $X,Y,Z,T\in\mathfrak{p}$. This operator is called the \textit{curvature operator} of $M$. We generalize this construction in infinite dimension.

\begin{defi}\label{curvop}The curvature operator of $M$ is   the linear operator $C\colon\Wedge\mathfrak{p}\to$L$(\mathfrak{p})$ 
such that $(X\wedge Y,Z\wedge T)=-\langle C(X\wedge Y),Z\wedge T\rangle_g$. 
\end{defi}

Actually, $C(X\wedge Y)$ is skew-symmetric and thanks to equation (\ref{tensor}), we know that $C(X\wedge Y)Z=1/2[Z,[X,Y]]$.

We say that the curvature operator is \textit{nonpositive} (respectively \textit{nonnegative}) if for any $U\in\Wedge\mathfrak{p}$, $\langle C(U),U\rangle_g\leq0$ (respectively $\langle C(U),U\rangle_g\geq0$). Observe that $C$ is nonpositive (respectively nonnegative) if for any families $(X_i)_{i=1\dots n}$, $(Y_i)_{i=1\dots n}$, \[\sum_{i,j=1}^nR(X_i,Y_i,X_j,Y_j)\geq 0\] (respectively $\sum_{i,j}R(X_i,Y_i,X_j,Y_j)\leq 0$).

Now we assume that $(M,g)$ is a Riemannian symmetric space of  fixed-sign curvature operator. For brevity, we will write $M$ is NPCO (resp. NNCO) if $M$ has nonpositive curvature operator (resp. nonnegative curvature operator). We want to endow $[\mathfrak{p},\mathfrak{p}]$ with a structure of preHilbert space. For $U=\sum_i[X_i,Y_i]$ and $V=\sum_j[Z_j,T_j]$, we define $\langle U,V\rangle=-\sum_jg([U,Z_j],T_j)$ if $M$ is NPCO and $\langle U,V\rangle=\sum_jg([U,Z_j],T_j)$ if $M$ is NNCO. For example, if $M$ is NPCO
\[\langle U,V\rangle=\sum_{i,j}R(X_i,Y_i,Z_j,T_j)=\sum_{i,j}(X_i\wedge Y_i,Z_j\wedge T_j).\]

\begin{lemm} The bilinear form $\langle\ ,\ \rangle$ is a scalar product on $[\mathfrak{p},\mathfrak{p}]\oplus\mathfrak{p}$ such that $\mathfrak{p}$ and $[\mathfrak{p},\mathfrak{p}]$ are orthogonal and its restriction to $\mathfrak{p}$ is $g$. 
\end{lemm}

\begin{proof}The symmetries of the Riemann tensor imply that $\langle\ ,\ \rangle$ is a symmetric bilinear form and the hypothesis on the curvature operator implies this form is nonnegative in both cases. The relation $R(X,Y,Z,T)=R(Z,T,X,Y)$ for $X,Y,Z,T\in\mathfrak{p}$ implies  for any $U\in[\mathfrak{p},\mathfrak{p}]$ that\begin{equation}\label{NPCO}
g([X,U],Y)=\langle U,[X,Y]\rangle
\end{equation} 
if $M$ is NPCO and 
\begin{equation}\label{NNCO}
g([X,U],Y)=-\langle U,[X,Y]\rangle
\end{equation}
if $M$ is NNCO. Moreover, the Cauchy-Schwarz inequality implies that if $\langle U,U\rangle=0$ then for any $X,Y\in\mathfrak{p}$, $g([U,X],Y)=\pm\langle U,[X,Y]\rangle=0$ and thus the Killing field $U$ is trivial.
\end{proof}
 We denote by $\mathfrak{k}$ the completion of $[\mathfrak{p},\mathfrak{p}]$ with respect to $\langle\ ,\ \rangle$, we extend $\langle\ ,\ \rangle$ on $\mathfrak{k}\oplus\mathfrak{p}$ and we denote by $||\ ||$ the associated norm. Thus, ($\mathfrak{k}\oplus\mathfrak{p}$, $\langle\ ,\ \rangle$) is a separable Hilbert space.
\begin{proof}[Proof of Theorem \ref{existence}]We show that the Lie algebra structure on $[\mathfrak{p},\mathfrak{p}]\oplus\mathfrak{p}$ extends to a L$^*$-algebra structure on $\mathfrak{k}\oplus\mathfrak{p}$. Since the Riemann 4-tensor is a bounded 4-linear form at each point, there exists a constant $\kappa$ such that $R(X,Y,Z,T)\leq\kappa||X||\, ||Y||\, ||Z||\, ||T||$ for any $X,Y,Z,T\in\mathfrak{p}$. Thus $||[X,Y]||\leq \sqrt{\kappa}||X||\, ||Y||$. If $U\in\mathfrak{k}$ and $X,Y\in\mathfrak{p}$ then $|\langle X,[U,Y]\rangle|=|\langle U,[X,Y]\rangle|\leq||U||\cdot||[X,Y]||$. The Lie bracket extends continuously to $\mathfrak{k}\times\mathfrak{p}$ and any $U\in\mathfrak{k}$ defines a bounded skew-symmetric operator ad$(U)\colon X\mapsto[U,X]$.

Moreover, Jacobi's identity for $U\in[\mathfrak{p},\mathfrak{p}]$ and $X,Y\in\mathfrak{p}$,
\[ [U,[X,Y]]=[[U,X],Y]+[X,[U,Y]],\]
shows that $[\mathfrak{p},\mathfrak{p}]$ is a subalgebra of the algebra of Killing fields. Observe that for $U\in\mathfrak{k}, t\in\R$, $\exp(t\operatorname{ad}(U))$ is an isometry of $\mathfrak{p}$ which preserves $R$. Thus, if $\Phi_t$ is defined as $\exp_p\circ\exp(t\operatorname{ad}(U))\circ\exp_p^{-1}$ on a ball around $p$ where $\exp_p$ is a diffeomorphism, then Theorem \ref{cartantheorem} shows that $\Phi_t$ is an isometry which can be extended in an isometry (also denoted $\Phi_t$) of $M$ thanks to Proposition \ref{global}. Thus, $t\mapsto\Phi_t$ is a smooth 1-parameter group of isometries fixing $p$ and the Killing field $U_0$ corresponding to this 1-parameter group satisfies $[U_0,X]=[U,X]$ for any $X\in\mathfrak{p}$ and we can identify $U$ with $U_0$. In particular, $\mathfrak{k}$ identifies with a subalgebra of $\mathfrak{q}$.

We now define the involution. For $U\in\mathfrak{k}$, we set $U^*=-U$ and for $X\in\mathfrak{p}$, we set $X^*=X$ if the curvature operator is nonpositive and $X^*=-X$ if the curvature operator is nonnegative. It remains to show that \begin{equation}\label{sum}\langle[X,Y],Z\rangle=\langle Y,[X^*,Z]\rangle\end{equation}
 for any $X,Y,Z\in\mathfrak{k}\oplus\mathfrak{p}$. Thanks to linearity and relations $[\mathfrak{k},\mathfrak{k}]\subseteq\mathfrak{k}$, $[\mathfrak{p},\mathfrak{p}]\subseteq\mathfrak{k}$, $[\mathfrak{k},\mathfrak{p}]\subseteq\mathfrak{p}$ and $\mathfrak{k}\bot\mathfrak{p}$, it suffices to show Equation (\ref{sum}) in the case $X\in\mathfrak{k}$, $Y,Z\in\mathfrak{p}$ and in the case $X,Y,Z\in\mathfrak{k}$. Suppose that $X\in\mathfrak{k}$, $Y,Z\in\mathfrak{p}$ then using Equations (\ref{NPCO}) and (\ref{NNCO}) we have
\[\langle[X,Y],Z\rangle=\pm\langle X,[Z,Y]\rangle=\mp\langle X,[Y,Z]\rangle=\mp\langle[X,Z],Y\rangle=\langle Y,[X^*,Z]\rangle.\]
For the case $X,Y,Z\in\mathfrak{k}$, thanks to continuity and linearity, we assume that $X=[X_1,X_2]$ for some $X_1,X_2\in\mathfrak{p}$. We treate only the case where $M$ is NPCO, the other case is similar.
\begin{align*}
\langle[X,Y],Z\rangle&=\langle[[X_1,X_2],Y],Z\rangle\\
&=-\langle[[Y,X_1],X_2]+[X_1,[Y,X_2]],Z\rangle\\
&=-\langle[Y,X_1],[Z,X_2]\rangle-\langle[Y,X_2],[X_1,Z]\rangle\\
&=-\langle Y,[[Z,X_2],X_1]+[[X_1,Z],X_2]\rangle\\
&=\langle Y,[Z,[X_1,X_2]]\rangle\\
&=-\langle Y,[X,Z]\rangle=\langle Y,[X^*,Z]\rangle.
\end{align*}

\end{proof}
\begin{defi}Let $(M,g)$ be a simply-connected Riemannian symmetric space with fixed-sign curvature operator and let $L=\mathfrak{k}\oplus\mathfrak{p}$ be a L$^*$-algebra associated to $M$ as in Theorem  \ref{existence}. We call merely $L$ the L$^*$-algebra associated to $M$. The Cartan involution of $L$ associated to this decomposition is the map $\theta\colon L\to L$ defined by $\theta(U+X)=U-X$ for $U\in\mathfrak{k}$ and $X\in\mathfrak{p}$. 
\end{defi}

\begin{lemm}The pair $(L,\theta)$ is an orthogonal symmetric  L$^*$-algebra.
\end{lemm}

\begin{proof}It is clear from the definition of $\theta$ that it is an involutive isometric map and that points (i) and (iii) of Definition \ref{osla} are satisfied. Relations $[\mathfrak{p},\mathfrak{p}]\subseteq\mathfrak{k}$, $[\mathfrak{k},\mathfrak{p}]\subseteq\mathfrak{p}$ and $[\mathfrak{k},\mathfrak{k}]\subseteq\mathfrak{k}$ show that $\theta$ is an automorphism of the Lie algebra $L$.
\end{proof}

\begin{prop}\label{uniqueness} Let $(M,g)$ be a simply-connected Riemannian symmetric space and let $L$, $L'$ be L$^*$-algebras with orthogonal decompositions $L=\mathfrak{k}\oplus\mathfrak{p}$ and $L'=\mathfrak{k}'\oplus\mathfrak{p}'$ satisfying (i) and (ii) of Theorem \ref{existence} then $L$ and $L'$ are isomorphic.
\end{prop}
\begin{proof}First, $\mathfrak{p}$ and $\mathfrak{p'}$ are isometric as Hilbert spaces and they generate isomorphic Lie algebras. Now, it suffices to observe that this isomorphism is also an isometry since the inner products are determined by their respective restrictions on  $\mathfrak{p}$ and $\mathfrak{p'}$.
\end{proof}
We state a little bit more precise theorem than Theorem \ref{localisomorphism}. %
\begin{theorem}\label{isometry}Let $(M,g)$ and $(M',g')$ be simply connected Riemannian symmetric spaces with fixed-sign curvature operator. Let $p\in M$ and $p'\in M'$and let $L,L'$ be the  two L$^*$-algebras with orthogonal decompositions $L=\mathfrak{k}\oplus\mathfrak{p}$ and $L'=\mathfrak{k}'\oplus\mathfrak{p}'$ associated to $M$ and $M'$ with respect to $p\in M$ and $p'\in M'$.

Assume there exists an isomorphism  of L$^*$-algebras between $L$ and $L'$ which intertwines the previous orthogonal decompositions. Then $M$ and $M'$ are isometric.\end{theorem}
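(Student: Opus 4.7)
The plan is to promote the L$^*$-algebra isomorphism to an explicit diffeomorphism between the normal neighborhoods and then verify it is an isometry by comparing pullback metrics in the exponential charts, the match being forced by the fact that on a symmetric space the full metric on a normal neighborhood is determined by the Riemann tensor at the base point.

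First I would restrict the isomorphism $\varphi\colon L\to L'$ to the $\mathfrak{p}$-summands. By property~(i) of Theorem~\ref{existence}, this yields a linear isometry $\Phi\colon T_pM\to T_{p'}M'$. Because $\varphi$ is a Lie algebra homomorphism that intertwines the orthogonal decompositions, $\Phi$ intertwines the triple brackets on $\mathfrak{p}$ and $\mathfrak{p}'$: $\Phi[Z,[X,Y]]=[\Phi Z,[\Phi X,\Phi Y]]$. Combined with the isometry property and Equation~(\ref{tensor}), $\Phi$ carries the Riemann $4$-tensor of $M$ at $p$ onto that of $M'$ at $p'$.

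Next I would define
\[f := \exp_{p'}\circ\Phi\circ\exp_p^{-1}\colon B(p,r)\longrightarrow B(p',r),\]
which is a well-defined diffeomorphism because both balls are normal. To establish that $f$ is an isometry, it suffices to show that the pullback metrics $\exp_p^{*}g$ and $\Phi^{*}\exp_{p'}^{*}g'$ agree on the open ball of radius $r$ in $T_pM$. For $X\in T_pM$ with $\|X\|<r$ and $V\in T_pM$, the value $d(\exp_p)_X(V)$ is $J(1)$ for the Jacobi field $J$ along $\gamma(t)=\exp_p(tX)$ with $J(0)=0$, $J'(0)=V$. On a symmetric space one has $\nabla R=0$ (applying the symmetry $\sigma_p$, which fixes $p$, preserves $R$, and has differential $-\mathrm{Id}$, forces the odd-order tensor $\nabla R$ to vanish at $p$, and hence everywhere by homogeneity). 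Conjugating the Jacobi equation by parallel transport $P_{\gamma,t}$ turns it into the linear constant-coefficient ODE $\widetilde J''+R_X\widetilde J=0$ on $T_pM$, where $R_XV=R(V,X)X$ is bounded (continuity of the triple bracket was recorded in the proof of Theorem~\ref{existence}). Solving gives
\[d(\exp_p)_X(V)=P_{\gamma,1}\,\Psi(R_X)V,\qquad \Psi(z)=\sum_{k\geq 0}\frac{(-z)^k}{(2k+1)!},\]
with $\Psi$ an entire function; the exact same formula holds on $M'$. Since $\Phi$ is an isometry that intertwines $R_X$ with $R'_{\Phi X}$ (by Step~1), and parallel transport preserves norms on both sides, one obtains $\|d(\exp_p)_X(V)\|_g=\|d(\exp_{p'})_{\Phi X}(\Phi V)\|_{g'}$, which is exactly the desired equality of pullback metrics.

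The main obstacle is making the Jacobi-field/power-series argument rigorous in the infinite-dimensional setting. Boundedness of $R_X$ on $T_pM$ (needed for convergence of the series defining $\Psi(R_X)$) is guaranteed by the bound $\|[U,V]\|\leq\sqrt{\kappa}\,\|U\|\,\|V\|$ derived in the proof of Theorem~\ref{existence}, and existence/uniqueness for the Jacobi ODE together with the existence of parallel transport as a bounded linear isometry along a geodesic are developed in the infinite-dimensional framework of \cite{MR1666820}. Once these ingredients are in place, the argument reduces to a comparison of two identical explicit formulas, and Theorem~\ref{localisomorphism} follows from Theorem~\ref{isometry} by covering $M$ and $M'$ by normal neighborhoods at corresponding points.
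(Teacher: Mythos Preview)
Your argument is correct and constructs the same candidate map $f=\exp_{p'}\circ\Phi\circ\exp_p^{-1}$ as the paper. The difference is only in packaging. The paper verifies the hypotheses of Cartan's theorem \cite[Theorem~1.12.8]{MR1330918}: after noting that $\Phi$ intertwines the curvature tensors at the base points, it uses the parallelism $\nabla R=0$ of the curvature on a symmetric space \cite[XIII, \S6]{MR1666820} to show that the parallel-transported isometries $\mathfrak{i}_t=P_{0,c'}^t\circ\Phi\circ P_{t,c}^0$ intertwine the Jacobi operators $R_{\dot c(t)}$ and $R'_{\dot c'(t)}$ along every radial geodesic, and then cites Cartan's theorem to conclude. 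You instead inline the proof of Cartan's theorem in the locally symmetric case, solving the constant-coefficient Jacobi equation to obtain $d(\exp_p)_X=P_{\gamma,1}\,\Psi(R_X)$ and comparing norms directly. The paper's route is shorter and outsources the Jacobi-field analysis to a standard reference; yours is self-contained and makes the mechanism (the Jacobi equation becoming constant-coefficient precisely because $\nabla R=0$) explicit, at the cost of having to justify convergence of the operator series and the infinite-dimensional ODE theory, which you do correctly.
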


The isometry will be provided by Theorem \ref{cartantheorem}. For any Riemannian manifold $N$ with Riemannian 4-tensor $R$, a point $q\in N$ and $X\in T_qN$, we denote by $R_X\colon T_qN\to T_qN$ the symmetric operator such that $R_X(Y)=R(X,Y)X=[X,[X,Y]]$ for any $Y\in T_qN$. In the symetric case, $R_X(Y)=[X,[X,Y]]$.

\begin{proof}[Proof of Theorem \ref{isometry}]Choose $r>0$ such that $B(p,r)$ and $B(p',r)$ are normal balls. Let $\varphi$ be an isomorphism between $L$ and $L'$ such that $\varphi(\mathfrak{k})=\mathfrak{k}'$ and $\varphi(\mathfrak{p})=\mathfrak{p}'$. We define $i_p\colon T_pM\to T_{p'}M'$ to be the restriction of $\varphi$ to $\mathfrak{p}$ identified with $T_pM$. The map $i_p$ is a linear isometry between Hilbert spaces. 

First, since $\varphi$ is a Lie algebra isomorphism and an isometry \[R'(\varphi(X),\varphi(Y),\varphi(Z),\varphi(T))=\langle[\varphi(Z),[\varphi(X),\varphi(Y)]],\varphi(T)\rangle=R(X,Y,Z,T)\] for any $X,Y,Z,T\in T_pM$. Parallelism of the Riemann tensor (Equation \eqref{eqpara}) implies that for any radial geodesic $c\colon [0,r]\to M$ with $c(0)=p$, $i_t\circ R_{\dot{c}(t)}=R_{\dot{c}'(t)}\circ i_t$ (see proof of Proposition \ref{global} for the computation) and the hypotheses of Cartan's theorem are now satisfied. This shows that $B(p,r)$ and $B(p',r)$ are isometric and thanks to Proposition \ref{global}, $M$ and $M'$ are isometric.
\end{proof}
The following proposition gives a natural condition which implies a decomposition as asked in Question \ref{question}.
\begin{prop}\label{union}Let $M$ be a Riemannian symmetric space. If there exists a dense increasing union of totally geodesic subspaces of finite dimension containing a point $p\in M$, then there is an orthogonal decomposition
\[T_pM=\mathfrak{p}_-\oplus\mathfrak{p}_0\oplus\mathfrak{p}_+\]
such that
\begin{itemize}
\item the subspaces $\mathfrak{p}_-$, $\mathfrak{p}_0$ and $\mathfrak{p}_+$ are commuting Lie triple systems of the Lie  algebra of Killing fields,
\item the restrictions of the curvature operator are nonnegative on $\mathfrak{p}_-$, trivial on $\mathfrak{p}_0$ and nonpositive on $\mathfrak{p}_+$.
\end{itemize}
\end{prop}
\begin{proof} Let $(M_n)$ be an increasing sequence of finite dimensional totally geodesic subspaces of $M$ such that their union is dense in $M$. Choose $p\in M_1$ and let $R^{M_n}$ be the Riemannian tensor of $M_n$ at $p$. Since $M_n$ is totally geodesic in $M$, for any $X,Y,Z,T\in T_pM_n$, $R^{M_n}(X,Y,Z,T)=R(X,Y,Z,T)$ (see \cite[Corollary XIV.1.4]{MR1666820}). Moreover, for any $x\in M_n$, $\sigma_x(M_n)=M_n$ and thus $M_n$ is a Riemannian symmetric space on its own. Now, The tangent space $\mathfrak{p}_n:=T_pM_n$ can be decomposed as $\mathfrak{p}^n_-\oplus\mathfrak{p}^n_0\oplus\mathfrak{p}^n_+$ where $\mathfrak{p}^n_-$, $\mathfrak{p}^n_0$ and $\mathfrak{p}^n_+$ satisfy properties of the proposition. We claim that for $m> n$, $\mathfrak{p}^n_-\subseteq\mathfrak{p}^m_-$ and $\mathfrak{p}^n_+\subseteq\mathfrak{p}^m_+$. Actually, if $\mathfrak{g^n}$ is the Lie subalgebra $[\mathfrak{p}_n,\mathfrak{p}_n]\oplus\mathfrak{p}_n$ of the isometry group of $M_n$ then there is a  structure of orthogonal symmetric Lie algebra (see  \cite[Chapters IV and V]{MR1834454}) on $\mathfrak{g}^n$, which can be decomposed as 
\[\mathfrak{g}^n=\mathfrak{g}^n_-\oplus\mathfrak{p}^n_0\oplus\mathfrak{g}^n_+\]
where $\mathfrak{g}^n_-,\mathfrak{g}^n_+$ are respectively compact and noncompact; and $\mathfrak{p}_0^n$ is the maximal central Abelian subspace of $\mathfrak{p}_n$. In particular, $\mathfrak{g}^n$ is a subalgebra of $\mathfrak{g}^m$ and $\mathfrak{s_n}:=\mathfrak{g}^n_-\oplus\mathfrak{g}^n_+$ is a semisimple Lie algebra and thus contained in $\mathfrak{s}_m$. The semisimple algebras $\mathfrak{s}_n$ and $\mathfrak{s}_m$ are orthogonal sums of simple ideals of compact or noncompact types. Let $\pi$ be the orthogonal projection on a simple ideal $J$ of $\mathfrak{s}_m$. The restriction of $\pi$ to any simple ideal $I$ of $\mathfrak{s}_n$ is either trivial or is an isomorphism of orthogonal symmetric Lie algebras on its image. In particular, if $\pi(I)\neq\{0\}$ then $I$ and $J$ are both compact or noncompact. This proves the claim.

 We set $\mathfrak{p}_+=\overline{\cup_n\mathfrak{p}^n_+}$, $\mathfrak{p}_-=\overline{\cup_n\mathfrak{p}^n_-}$ and $\mathfrak{p}_0=\{X\in\mathfrak{p},\ [X,Y]=0,\ \forall Y\in\mathfrak{p}\}$. Let $X\in(\mathfrak{p}_+\oplus\mathfrak{p}_-)^\bot$, then if $\pi_n\colon\mathfrak{p}\to\mathfrak{p}_n$ is the orthogonal projection on $\mathfrak{p}_n$ then $\pi_n(X)\in\mathfrak{p}^n_0$. Actually for any $Y\in\mathfrak{p}$,
\begin{align*}
[Y,X]=0&\iff [Z,[X,Y]]=0,\ \forall Z\in\mathfrak{p}\\
&\iff g([Z,[X,Y]],T)=R(X,Y,Z,T)=0,\ \forall Z,T\in\mathfrak{p}.
\end{align*}
Thus, $R(X,Y,Z,T)=\lim_nR(\pi_n(X),\pi_n(Y),Y,T)=0$ for any $Z,T\in\mathfrak{p}$ and $[X,Y]=0$.
Therefore $(\mathfrak{p}_+\oplus\mathfrak{p}_-)^\bot=\mathfrak{p}_0$ and we have the desired decomposition.
\end{proof}
\section{Nonpositive curvature}\label{nonpositive}
\subsection{Geometry of nonpositive curvature spaces}

 A Riemannian manifold of finite dimension is locally CAT(0) (or is nonpositively curved in the sense of Alexandrov) if and only if it has nonpositive sectional curvature. The same result is also true in infinite dimension and a proof can be found in \cite[Theorem IX.3.5]{MR1666820}. We refer to \cite{MR1744486} for generalities about CAT(0) spaces.

\begin{prop}\label{simplyconnected} If $(M,g)$ is a Riemannian symmetric space with nonpositive sectional curvature and no local Euclidean factor then $M$ is simply-connected, the exponential map at any point is a diffeomorphism and $M$ is CAT(0).
\end{prop}
\begin{proof}Consider the universal cover $\widetilde{M}$ of $M$. This universal cover has a natural structure of Riemannian manifold turning the projection $\pi:M\to\widetilde{M}$ into a Riemannian covering. In that way $\widetilde{M}$ is simply-connected and is locally CAT(0) since $M$ is locally CAT(0). The space $\widetilde{M}$ is a CAT(0) space thanks to Cartan-Hadamard theorem \cite[Theorem II.4.1]{MR1744486}.

Choose $\tilde{x},\tilde{y}\in\widetilde{M}$. The projection of the geodesic segment between $\tilde{x}$ and $\tilde{y}$ is a (locally minimizing) geodesic segment between $x=\pi(\tilde{x})$ and $y=\pi(\tilde{y})$. Let $f_t$ be the isometry $\sigma_{x_t}\circ\sigma_x$ where $x_t$ is the point at distance $td(\tilde{x},\tilde{y})/2$ from $x$ on the previous segment and $t\in[0,1]$. Let $(F_t)_{t\in[0,1]}$ be a lift of $(f_t)_{t\in[0,1]}$ such that $F_0=$Id. Remark that $t\mapsto F_t(\tilde{x})$ is a lift of the geodesic segment from $x$ to $y$ and since $F_0(\tilde{x})=\tilde{x}$, this is the geodesic from $\tilde{x}$ to $\tilde{y}$ and thus $F_1(\tilde{x})=\tilde{y}$ . Since $\pi$ is a Riemannian covering, we observe that $F_t$ is an isometry of $\widetilde{M}$ for any $t\in[0,1]$.

For $\gamma\in\pi_1(M)$ and $t\in[0,1]$,

\[\pi\circ F_t\circ\gamma=f_t\circ\pi\circ \gamma=f_t\circ\pi=\pi\circ F_t.\]

The map $\pi\circ F_t$ is a Riemannian covering and thus for any $t$, there exists $\gamma'$ such that $F_t\circ\gamma=\gamma'\circ F_t$. A connectedness argument shows that $\gamma'$ is independent of $t$ and since $F_0=$Id then $\gamma'=\gamma$. This shows that the displacement function of $\gamma$ is the same at $x$ and at $y$ and thus is constant on $\widetilde{M}$. Suppose this displacement length is not zero then $\gamma$ is a Clifford translation, $\widetilde{M}$ has a Euclidean factor and $\widetilde{M}\simeq\R\times \widetilde{N}$ as metric space. This is a contradiction and thus $\gamma$ is trivial.

Since we know that $M$ is simply-connected, Cartan-Hadamard theorem \cite[Theorem IX.3.8]{MR1666820} shows that the exponential map at any point is a diffeomorphism.
\end{proof}
\subsection{L$^*$-algebras associated to Riemannian symmetric spaces with nonpositive curvature operator}\label{constructionspace}
For the remainder of the section, $(M,g)$ will be a separable Riemannian symmetric with nonpositive curvature operator and no Euclidean local de Rham factor.

\begin{lemm}\label{baba}The L$^*$-algebra associated to $M$ is a Hilbertian sum $L=\mathfrak{p}_0\oplus^2_i L_i$ where $\mathfrak{p}_0$ is an abelian ideal of $\mathfrak{p}$ and each $L_i$ is a noncompact simple ideal.\end{lemm}
\begin{proof}Let $L_0$ be the center of $L$. Since $L_0$ is $*$-invariant, one can decompose $L_0=\mathfrak{p}_0\oplus\mathfrak{k}_0$ where $\mathfrak{p}_0,\mathfrak{k}_0$ are $\pm1$-eigenspaces of $*$. Since any L*-algebra is the sum of its center and a Hilbertian sum of simple ideals, one has $L=L_0\oplus_i^2L_i$ where each $L_i$ is simple. Let $\mathfrak{p}_i$ be the $1$-eigenspace of $L_i$. One has $\mathfrak{p}=\mathfrak{p}_0\oplus\oplus_i^2\mathfrak{p}_i$ and since $L=\overline{[\mathfrak{p},\mathfrak{p}]}\oplus\mathfrak{p}=\mathfrak{p}_0\oplus^2_i L_i$, one has $\mathfrak{k}_0=\{0\}$.

Assume for contradiction that there is a $L_i$ which is compact. By construction $L=\overline{[\mathfrak{p},\mathfrak{p}]}\oplus\mathfrak{p}$ and since $L_i$ is invariant under $*$ then $L_i\subset \overline{[\mathfrak{p},\mathfrak{p}]}$. Thus, $\mathfrak{p}\subseteq\oplus_{j\neq i} L_j$, $[\mathfrak{p},L_i]=0$ and $[L_i,L]=0$, which is a contradiction.
\end{proof} Thanks to the classification of simple separable real L$^*$-algebras, we know that each $L_i$ that has infinite dimension, is homothetic to one element of the list in Table \ref{liste}.

Each of these algebras can be realized as a L$^*$-subalgebra of $\mathfrak{gl}_\infty^2(\R)$, which is the Lie algebra of Hilbert-Schmidt operators of some real separable Hilbert space $\mathcal{H}$, endowed with the Hilbert-Schmidt norm. For $X\in\mathfrak{gl}_\infty^2(\R)$, $X^*$ is the adjoint of $X$ as operator on $\mathcal{H}$. The algebra $\mathfrak{gl}_\infty^2(\R)$ is the Lie algebra of the Hilbert-Lie group GL$_\infty^2(\R)$. If O$^2(\infty)$ is the intersection of GL$_\infty^2(\R)$ and the orthogonal group O$(\mathcal{H})$ of $\mathcal{H}$ then GL$_\infty^2(\R)/$O$^2(\infty)$ is a Riemannian symmetric space with nonpositive curvature operator (see for example \cite[III.2]{MR0476820}).

Let $\mathfrak{g}$ be any L$^*$-algebra of the previous list viewed as a L$^*$- subalgebra of $\mathfrak{gl}_\infty^2(\R)$. Let $G$ be the closed subgroup of GL$_\infty^2(\R)$ generated by $\exp{\mathfrak{g}}$ and $K=G\cap O(\mathcal{H})$. If $\mathfrak{g}=\mathfrak{k}\oplus\mathfrak{p}$ is the decomposition of $\mathfrak{g}$ into skew-symmetric and symmetric parts then thanks to \cite[Proposition III.4]{MR0476820}, $\exp(\mathfrak{p})$ is a totally geodesic subspace of GL$_\infty^2(\R)/$O$^2(\infty)$, $G$ acts transitively on $\exp(\mathfrak{p})$ and $K$ is the stabilizer of Id in $G$. In this way, $\exp(\mathfrak{p})\simeq G/K$. When $\mathfrak{g}$ varies among the elements of Table \ref{liste}, one obtains the irreducible symmetric spaces with nonpositive curvature operator which appear in Theorem \ref{noncompact}.

Let $L$ be a simple noncompact L$^*$-algebra , let $\mathfrak{g}$ be the element of the homothety class of $L$ that is in the previous list and let $\lambda$ be the scaling factor such that $L=\lambda\cdot\mathfrak{g}$. The \textit{Riemannian symmetric space associated to} $L$ is the space $G/K$ endowed with the metric that is the multiple by $\lambda$ of the metric coming from the embedding in  GL$_\infty^2(\R)/$O$^2(\infty)$.

It is a routine verification to show that if one starts from a simple noncompact L$^*$-algebra $L$, one considers the Riemannian symmetric space $M$ associated to $L$ and  one constructs the L$^*$-algebra as in Section \ref{construction} then the L$^*$-algebra constructed is isomorphic to $L$. 
\begin{rema}If $L$ is a noncompact simple L$^*$-algebra of finite dimension then it is a simple Lie algebra of noncompact type in the usual sense. It is associated to a Riemannian symmetric space of noncompact type and $L$ coincides with the L$^*$-algebra associated to this Riemannian symmetric space (see Example \ref{fintitedimension} and \cite[Chapter V]{MR1834454}). 
\end{rema}
\begin{proof}[Proof of Theorem \ref{noncompact}] Let  $L=\mathfrak{k}\oplus\mathfrak{p}$ be the L$^*$-algebra to $M$. This algebra $L$ is a Hilbertian sum $L=\mathfrak{p}_0\oplus^2L_i$ of simple noncompact L$^*$-algebras (Lemma \ref{baba}). For each $L_i$, let $M_i$ be the Riemannian symmetric space associated to $L_i$ and let $\mathcal{H}$ be a Hilbert space isometric to $\mathfrak{p}_0$. Now, consider the Hilbertian product $\mathcal{H}\times\prod^2_i M_i$. This is a simply-connected symmetric space whose associated L*-algebra is also $L$. Now Theorem \ref{isometry} implies that  $M$ and $\mathcal{H}\times\prod^2_i M_i$ are isometric. Since $M$ has trivial Euclidean de Rham factor, $\mathcal{H}$ is reduced to a point and  $M\simeq\prod^2_i M_i$.
\end{proof}

\begin{rema}Let $X=\prod^2_{i\in I}X_i$ and $Y=\prod^2_{j\in J}Y_i$ be two Hilbertian products of pointed metric spaces $(X_i,x_i,d_i)$ and $(Y_j,y_j,\delta_j)$. We say that $X$ and $Y$ are \textit{multihomothetic} if there exists a bijection $\varphi\colon I\to J$, a family of scaling factors $(\lambda_i)_{i\in I}$ and isometries $\Phi_i\colon(X_i,\lambda_i d_i)\to(Y_{\varphi(i)},\delta_{\varphi(i)})$ such that $\Phi_i(x_i)=y_{\varphi(i)}$.

We emphasize that the diagonal map between cartesian products
\[\begin{array}{rcc}
\Phi\colon \prod X_i&\to&\prod Y_j\\
(x_i)&\mapsto&\left(\Phi_{\varphi^{-1}(j)}(x_{\varphi^{-1}(j)})\right)
\end{array}\]
induces a bijection, which is a homeomorphism, between $X$ and $Y$ if and only if there are two positive numbers $c,C>0$ such that $c\leq\lambda_i\leq C$ for all $i\in I$.

It is a classical fact that any Riemannian symmetric space of noncompact and finite dimension is multihomothetic to  a totally geodesic subspace of SL$_n(\R)$/SO$_n(\R)$ for some $n$. This is also true in general. Let $M=\prod^2M_i$ be a separable Riemannian symmetric space with nonpositive curvature  operator and no Euclidean local de Rham factor. Let $L=\oplus^2L_i$ be its associated L$^*$-algebra. Let $\mathfrak{g}_i$ be the L$^*$-algebra homothetic to $L_i$ that is a L$^*$-subalgebra of $\mathfrak{gl}^2(\mathcal{H}_i)$ where $\mathcal{H}_i$ is a real Hilbert space of finite or infinite dimension and  $\mathfrak{gl}^2(\mathcal{H}_i)$  is the L$^*$-algebra of Hilbert-Schmidt operators on $\mathcal{H}_i$. Let $\mathcal{H}$ be the Hilbertian sum $\oplus^2\mathcal{H}_i$. Thus, 
\[\oplus^2\mathfrak{g}_i\leq\oplus^2\mathfrak{gl}^2(\mathcal{H}_i)\leq\mathfrak{gl}^2(\mathcal{H}).\]
The image by the exponential map of the symmetric part of $\oplus^2\mathfrak{g}_i$ is a totally geodesic subspace of GL$^2(\mathcal{H})/$O$^2(\mathcal{H})$ and this space is multihomothetic to $M$ (but the multihomothety is not necessarily a homeomorphism).
\end{rema}
\begin{proof}[Proof of Corollary \ref{finite rank}] Let $M\simeq\prod^2M_i$ be the Hilbertian decomposition obtained in Theorem \ref{noncompact}. Since the rank is greater or equal to the number of factors and greater or equal to the rank of each factor, the Hilbertian product is actually a finite one and each factor has finite rank. The only possible factors of infinite dimension are $X_p(\K)$ since the others contained increasing sequence of finite dimensional totally geodesic subspaces of increasing rank.

The telescopic dimension is always greater or equal to the rank and it is exactly equal to the rank when the symmetric space has finite dimension or is $X_p(\K)$ because in both cases, any asymptotic cone is a Euclidean building of dimension equal to the rank (see  \cite{MR1608566} and \cite[Corollary 1.4]{MR3044451}).
\end{proof}
\subsection{A CAT(0) symmetric space which is not a Riemannian manifold}\label{counterexample}
We describe an example of a CAT(0) symmetric space which is not a Riemannian manifold. Let $\mathbb{H}$ be  the hyperbolic plane with constant sectional curvature $-1$. We fix an origin $o\in\mathbb{H}$. We consider $X=$L$^2([0,1],\mathbb{H})$, the space of measurable maps $x\colon t\mapsto x_t$ from $[0,1]$ (endowed with the Lebesgue measure) to $\mathbb{H}$ such that $t\mapsto d(o,x_t)$ is a square integrable function. This space (called \textit{Pythagorean integral} in \cite{MR2219304}) endowed with the distance
\[d(x,y)=\left(\int_{[0,1]}d(x_t,y_t)^2\mathrm{d}t\right)^{1/2}\]
is a complete separable CAT(0) space. Geodesics can be easily described as in \textit{loc. cit.}. Actually, if $I$ is a real interval, a map $g\colon I\to X$ is a geodesic when there exist a measurable map $\alpha\colon [0,1]\to \R^+$ and a collection of geodesics $g_t\colon \alpha(t)I\to\mathbb{H}$ such that
\[\int_{[0,1]}\alpha(t)^2\mathrm{d}t=1,\ (g(s))_t=g_t(\alpha(t)s)\]
for all $s\in I$ and almost all $t\in[0,1]$. For $h\in\mathbb{H}$, let $S_h$ be the geodesic symmetry at $h$ in $\mathbb{H}$. For $x,y\in X$, we set $\sigma_x(y)$ to be the map $t\mapsto S_{x_t}(y_t)$. The description of geodesics implies that $S_x$ is the geodesic symmetry at $x$. Therefore $X$ is a CAT(0) symmetric space.

Let $X$ be a CAT(0) space and $x$ be a point in $X$. The space of directions $\Sigma_x$ of $X$ at $x$ is the set of classes of geodesic rays starting at $x$. Two rays are identified if their Alexandrov angle vanishes. The Alexandrov angle yields a distance on the quotient. The tangent cone $T_x$ is the Euclidean cone over $\Sigma_x$. We describe $\Sigma_x$ and $T_x$ for $x\in$L$^2([0,1],\mathbb{H})$ below. We denote by $\overline{\angle}_x(y,z)$  the comparison angle and by $\angle_x(y,z)$ the Alexandrov angle at $x$ between $y$ and $z$.
\begin{defi}Let $(Y,d)$ be a separable metric space of diameter less than $\pi$ and $(\Omega,\mu)$ a standard measure space. The \emph{integral join}, $\int_\Omega^*Y$, is the set of pairs $(y,v)=((y_\omega),(v_\omega))$ such that
\begin{enumerate}[(i)]
\item for all $\omega\in\Omega$, $y_\omega\in Y$ and $v_\omega\in\mathbb{R}^+$,
\item the map $\omega\mapsto v_\omega$ is measurable and $\int_\Omega v^2_\omega\mathrm{d}\mu(\omega)=1$, 
\item the map $\omega\mapsto y_\omega$ is measurable.
\end{enumerate} 
The metric on $\int_\Omega^*Y$ is defined by the formula
\[\cos\left(d((x,v),(y,w))\right)=\int_\Omega v_\omega w_\omega \cos(d(x_\omega,y_\omega))\mathrm{d}\mu(\omega).\]
\end{defi}
Let $\Sigma_o$ be the space of directions at our base point $o\in\mathbb{H}$. The tangent cone $T_o$ is simply the tangent space at $o$ and thus isometric to $\mathbb{R}^2$.
\begin{prop}(see also \cite[Remark 48]{MR2219304}) Let $x$ be a point in $L^2([0,1],\mathbb{H})$. The space of directions at $x$ is isometric to $\int_{[0,1]}^*\Sigma_o$. The tangent cone at $x$ is isometric to the Pythagorean integral $L^2([0,1],T_o)$ which is a Hilbert space.
\end{prop}
\begin{proof}Let $g,g'$ be two geodesics rays of $L^2([0,1],\mathbb{H})$ starting at $x$. Thanks to the description of geodesics, there exist $\{g_t\},\{g_t'\}$, families of geodesic rays starting at $o$ in $\mathbb{H}$ and $v,v'$ measurable maps $[0,1]\to\R^+$ with $L^2$-norm equal to 1. Therefore,
\begin{align*}
\cos(\angle_x(g,g))&=\lim_{s\to0}\cos(\cpangle_x(g(s),g(s)))\\
&=\lim_{s\to0}\frac{2s^2-d(g(s),g'(s))^2}{2s^2}\\
&=1-1/2\lim_{s\to0}\frac{d(g(s),g'(s))^2}{s^2}\\
&=1-1/2\lim_{s\to0}\frac{1}{s^2}\int_t(v_t^2+{v'}_t^2)s^2-2v_t v_t'\cos(\cpangle_o(g_t(v_t s),g'_t(v'_t s)))dt\\
&=\int_t v_t v'_t \cos(\angle_o(g_t,g'_t))dt .\\
\end{align*}
This equality shows that  $\Sigma_x$ embeds isometrically in  $\int^\ast_{[0,1]}\Sigma_{o}$. Conversely, if $((g_t),(v_t))$ is an element in  $\int^\ast_{[0,1]}\Sigma_{o}$, one can construct the geodesic $s\mapsto g(s)$ where $(g(s))_t=g_t(v_t s)$ for almost every $t$.\\

Now, we define a map  $\Phi\colon T_x\to L^2({[0,1]},T_o)$ through the formula
\[(\lambda,(g_t,v_t))\mapsto(\lambda v_t,g_t).\]
We compute
\[
d((\lambda,(g,v)),(\lambda',(g',v')))^2=\lambda^2+{\lambda'}^2-2\lambda\lambda'\int_{[0,1]} v_t v'_t \cos(\angle_o(g_t,g'_t))dt
\]
and
\begin{align*}
d((\lambda v_t,g_t),(\lambda' v'_t,g'_t))^2&=\int_{[0,1]}(\lambda v_t)^2+(\lambda'v'_t)^2-2\lambda v_t\cdot\lambda' v'_t\cos(\angle_o(g_t,g'_t))dt\\
&=\lambda^2+\lambda'^2-2\lambda\lambda'\int_{[0,1]} v_t v'_t \cos(\angle_o(g_t,g'_t))dt.
\end{align*}
This shows that $\Phi $ is an isometry and its inverse is given by 
\[(\lambda_t,g_t)\mapsto\left(\lambda,(g_t,\lambda_t/\lambda)\right)\]
where $\lambda=\sqrt{\int\lambda_t^2dt}$.
\end{proof}

A notion of \textit{bounded curvature} for geodesic metric spaces has been introduced in \cite{MR1373605}. We give a slightly different definition but equivalent in the case of CAT(0) symmetric spaces. If $x,y,z$ are distinct points in a CAT(0) space, we denote the area of the comparison Euclidean triangle by $S_{x,y,z}$.

\begin{defi}A CAT(0) space $X$ has \emph{bounded curvature} if for any $p\in X$, there exist $\rho_p,\mu_p>0$ such that for $x,y,z\in B(p,\rho_p)$, $y'\in]x,y]$ and $z'\in]x,z]$ we have
\[|\overline{\angle}_x(y,z)-\overline{\angle}_x(y',z')|\leq \mu_pS_{x,y,z}.\]
\end{defi}
In the case where $d(x,y)=d(x,z)=r$ then $S_{x,y,z}=\overline{\angle}_x(y,z)\frac{r^2}{2}$ and the condition of bounded curvature is 
\[\left|1-\frac{\angle_x(y,z)}{\overline{\angle}_x{y,z}}\right|\leq\frac{\mu_xr^2}{2}.\]
Since we restrict our definition of bounded curvature to CAT(0) spaces, it is actually a lower bound condition on the curvature. This condition is a local condition. If $M$ is a Riemannian manifold with nonpositive sectional curvature and with locally a uniform lower bound on the sectional curvature then $M$ has bounded curvature. This is a consequence of Rauch comparison theorem \cite[Theorem XI.5.1]{MR1666820}. In  particular, any Riemannian symmetric space of nonpositive sectional curvature has bounded curvature. Since these spaces are homogeneous, the lower bound of the sectional curvature at any point is actually a global lower bound. Observe that a tree with a vertex of valency larger than 2 does not have bounded curvature.

\begin{prop}The space $L^2([0,1],\mathbb{H})$ is not a Riemannian manifold.
\end{prop}
\begin{proof}
It suffices to show that $X=L^2([0,1],\mathbb{H})$ does not have bounded curvature. We fix $r<0$, $\alpha\in(0,\pi)$ and two geodesic rays starting at $o$ with an angle equal to $\alpha$ at $o$. For $0<\lambda<1$, we set $x_1^\lambda=\rho_1(r/\lambda)$ and $x_2^\lambda=\rho_2(r/\lambda)$. We construct points $x,y^\lambda,z^\lambda\in X$ defined by
\begin{align*}
x_t&=o\ \mathrm{for}\ t\in[0,1],\\
y_t^\lambda&=o\ \mathrm{for}\ t\in(\lambda,1],\\
z_t^\lambda&=o\ \mathrm{for}\ t\in(\lambda,1],\\
y_t^\lambda&=x_1^\lambda\ \mathrm{for}\ t\in[0,\lambda],\\
z_t^\lambda&=x_2^\lambda\ \mathrm{for}\ t\in[0,\lambda].
\end{align*} 
We have $d(x,y^\lambda)=d(x,z^\lambda)=r$ and $\overline{\angle}_x(y^\lambda,z^\lambda)=\overline{\angle}_o(x_1^\lambda,x_2^\lambda)$ which tends to $\pi$ as $\lambda\to0$. Since $\angle_x(y^\lambda,z^\lambda)=\angle_o(x_1^\lambda,x_2^\lambda)=\alpha$; choosing $\alpha$ small enough, the bounded curvature condition is not satisfied.
\end{proof}

If two geodesic rays starting at a point $x\in X=$L$^2([0,1])$ have vanishing Alexandrov angle then they are actually contained one in another. This allows us to define an exponential map $\exp_x\colon T_x\to X$. If $v\in T_x$ then $\exp_x(v)$ is defined to be the point at distance $||v||$ from $x$ in the direction corresponding to $v$. This map is a bijection and its inverse is continuous but the same example as above shows that $\exp_x$ is not continuous.
\begin{rema}This space has long been known and one can find a similar space denoted $\mathcal{H}^0(M,M')$ on p.134 of \cite{MR0164306}. The authors claimed that this space is not a manifold.
\end{rema}
\begin{rema} It has been proved in \cite[Proposition 3.9]{mathese} that a CAT(0) symmetric space with bounded curvature and no branching geodesics is homeomorphic to a Hilbert space. More precisely, an exponential map is defined from the tangent cone to the space  and this exponential map is a homeomorphism.
\end{rema}
\section{Nonnegative curvature}\label{nonnegative}

\begin{proof}[Proof of Theorem \ref{nonnegativebis}] By construction, for any $x\in L,  x^*=-x$ and if $L=L_0\oplus_i^2L_j$ where $L_0$ is the center and each $L_j$ is a simple ideal. Observe that each $L_i$ is compact. Let $L=L_0\oplus^2 I_i$ be the decomposition of $L$ into abelian and irreducible ideals invariant under $\theta$. That is, each $I_i$ is a simple ideal invariant under $\theta$ or $I_i=L_{i_1}\oplus L_{i_2}$ where $L_{i_1},L_{i_2}$ are simple ideals interchanged by $\theta$ (Proposition \ref{compact}). Let $\mathcal{H}$ be a Hilbert isometric to $I_0$ and assume that for all $I_i$, there is a simply-connected Riemannian symmetric space $M_i$ whose associated L*-algebra is $I_i$. The product space $\mathcal{H}\times \prod^2_i M_i$ is a simply-connected Riemannian symmetric space whose associated L*-algebra is $L$. By Theorem \ref{isometry}, $M$ and $\mathcal{H}\times\prod^2_i M_i$ are isometric.

Now, It remains to find a (unique) simply-connected Riemannian symmetric space for each compact  irreducible symmetric orthogonal L$^*$-algebra. We start with the case of an irreducible symmetric orthogonal L$^*$-algebra $(L,s)$ such that $L=\mathfrak{g}\oplus\mathfrak{g}$ where $\mathfrak{g}$ is a simple compact L$^*$ algebra (see Table \ref{listec}) and $s(X,Y)=(Y,X)$. For each $\mathfrak{g}=\mathfrak{u}^2(\infty),\mathfrak{o}^2(\infty)$ and $\mathfrak{sp}^2(\infty)$, we consider the Hilbert-Lie group $G=U^{2}(\infty), SO^2(\infty)$ (the identity component of $O^2(\infty)$, that is the set of operators in $O^2(\infty)$  such that $-1$ as infinite or even multiplicity as eigenvalue) and $Sp^2(\infty)$. The Lie algebra of $G$ is exactly $\mathfrak{g}$. We endow $G$ with the Riemannian metric induced by the scalar product on $\mathfrak{g}$ and invariant under left and right multiplication (observe that this metric is actually invariant under conjugation). Thus $G\times G$ acts by isometries on $G$ via the formula $(g,h)\cdot k=gkh^{-1}$ and $G$ identifies with $G\times G/\Delta G$ where $\Delta G$ is the diagonal subgroup in $G\times G$. The involution $g\mapsto g^{-1}$ is an isometry too with $id$ as isolated fixed point. So $G$ is a Riemannian symmetric space and $(\mathfrak{g},s)$ is the symmetric orthogonal L$^*$ algebra associated to $G$.

The remaining cases (of 3 types) will be realized as (universal cover of) homogeneous spaces $G/K$ where $G$ is one of the above Hilbert-Lie group associated to a simple compact L$^*$-algebra and $K$ will be the connected component of the fixed points set of an isometric involution $\sigma$ which yields the symmetry at $K$ in the symmetric space $G/K$.

\begin{enumerate}

\item Let $\mathcal{H}$ be a separable infinite dimensional real (resp. complex, quaternionic) Hilbert space with an orthogonal decomposition $\mathcal{H}=\mathcal{H}_1\oplus\mathcal{H}_2$ where $\mathcal{H}_2$ is infinite dimensional and $\mathcal{H}_1$ has dimension $p\in\mathbb{N}\cup\{\infty\}$. Let $J$ be the linear map $-id_{\mathcal{H}_1}\oplus id_{\mathcal{H}_2}$ and $\sigma(g)=JgJ$ for $g\in SO^2(\infty)$ (resp. $U^2(\infty)$ and $Sp^2(\infty)$). In this case, the connected component of the fixed points set of $\sigma$ is $SO^2(p)\times SO^2(\infty)$ (resp. $U^2(p)\times U^2(\infty)$ and $Sp^2(p)\times Sp^2(\infty)$).

\item Let $\mathcal{H}$ be a separable infinite dimensional real (resp. complex) Hilbert space and let $J$ be a complex (resp. quaternionic) structure on $\mathcal{H}$. That is $J$ corresponds to the multiplication by $i\in\mathbb{C}$ (resp. by the quaternionic number $j$). The involution $\sigma$ on $SO^2(\infty)$ (resp. $U^2(\infty)$) is $g\mapsto JgJ^{-1}$. In this case, the connected component of fixed points set is $U^2(\infty)$ (resp. $Sp^2(\infty)$).

\item Let $\mathcal{H}$ be a complex (resp. quaternionic) separable Hilbert space of infinite dimension and $\mathcal{H}_0$ be a real (resp. complex) form of $\mathcal{H}$ i.e. $\mathcal{H}=\mathcal{H}\oplus i\mathcal{H}_0$ (resp. $\mathcal{H}=\mathcal{H}\oplus j\mathcal{H}_0$ and let $J$ be the $\mathbb{R}$-linear (resp. $\mathbb{C}$-linear) map $id_{\mathcal{H}_0}\oplus -id_{i\mathcal{H}_0}$ (resp. $id_{\mathcal{H}_0}\oplus -id_{j\mathcal{H}_0}$). The involution $\sigma$ on $U^2(\infty)$ (resp. $Sp^2(\infty)$) is $g\mapsto JgJ^{-1}$. In this case, the connected component of fixed points set is $SO^2(\infty)$ (resp. $U^2(\infty)$).\end{enumerate}
\end{proof}

\begin{rema}The Riemannian symmetric spaces $Sp^2(\infty), Sp^2(\infty)/U^2(\infty), Sp^2(p+\infty)/\\ Sp^2(p)\times Sp^2(\infty), SO^2(p+\infty)/SO^2(p+\infty), SO^2(\infty)/U^2(\infty)$ and $ U^2(p+\infty)/ U^2(p)\times U^2(\infty)$ are simply-connected. The Riemannian symmetric spaces $U^2(\infty), U^2(\infty)/Sp^2(\infty)$ and $U^2(\infty)/SO^2(\infty)$ have fundamental group $\mathbb{Z}$ and $SO^2(\infty)$ has fundamental group $\mathbb{Z}/2\mathbb{Z}$. One can look at Chapter III  and II.8 of \cite{MR0476820} for more details.
\end{rema}

\section{Boundedness of the curvature operator}\label{curvatureoperator}
Let $M$ be a Riemannian symetric space let and $p$ be a point in $M$. We defined the \emph{curvature operator} $C\colon\Wedge\mathfrak{p}\to$L$(\mathfrak{p})$ (see Definition \ref{curvop}) thanks to the relation
\[\langle C(X\wedge Y),Z\wedge T\rangle_g=R(Y,X,Z,T).\]
A natural question, is to know if the symmetric bilinear form on $\Wedge\mathfrak{p}$ defined by
\[(X\wedge Y,Z\wedge T)=R(Y,X,Z,T)\]
is bounded. A positive answer to this question would imply that the curvature operator $C$ can be identified with a bounded operator from $\overline{\Wedge\mathfrak{p}}$, the completion of $\Wedge\mathfrak{p}$, to itself. In this case, the answer to the question asked in Question\ref{question} is positive. Actually, one can use the spectral theorem to the curvature operator (which is symmetric) on the Hilbert $\overline{\Wedge\mathfrak{p}}$ to decompose this Hilbert in an orthogonal sum of nonpositive and nonnegative part of $C$. Working a little bit harder, one can deduce a decomposition of $\mathfrak{p}$ as in Question \ref{question}.

Unfortunately the answer is negative in general even if the symmetric space has finite rank. Let us describe some examples.

\begin{exem}For spaces of constant sectional curvature $\kappa$, that are Hilbert spaces, spheres and hyperbolic spaces, the curvature operator is simply an homothety or ratio $\kappa$, that is for any $U\in\Wedge\mathfrak{p}$, $C(U)=\kappa U$ (under the identification of $\Wedge\mathfrak{p}$ with a subspace of L$(\mathfrak{p})$). References are \cite[Proposition 5, Section 3.3]{MR2243772} and \cite[Remark 1, IX, \S 3]{MR1666820}. In particular, the curvature operator is bounded.
\end{exem}

\begin{exem}\label{unbounded}Consider the symmetric space GL$_\infty^2(\R)/$O$^2(\infty)$ which can be identified with the set of positive definite operator $A$ such that $A-I$ is a Hilbert-Schmidt operator. The tangent space at the identity can be identified with the space of  symmetric Hilbert-Schmidt operators on a separable Hilbert space $\mathcal{H}$. We denote this space by $\mathfrak{s}^2(\mathcal{H})$ and by $\mathfrak{o}^2(\mathcal{H})$ the space of skew-symmetric Hilbert-Schmidt operators. The L$^*$-algebra associated to GL$_\infty^2(\R)/$O$^2(\infty)$ is $\mathfrak{gl}^2(\mathcal{H})=\mathfrak{s}^2(\mathcal{H})\oplus\mathfrak{o}^2(\mathcal{H})$. With our general notations $\mathfrak{p}=\mathfrak{s}^2(\mathcal{H})$, $\mathfrak{k}=\mathfrak{o}^2(\mathcal{H})$ and the Lie bracket is the usual bracket between operators. If $(e_i)$ is a orthonormal base of $\mathcal{H}$ then the maps $E_{i,j}\colon x\to \langle x,e_j\rangle e_i$ is an orthonormal base for $\mathfrak{gl}^2(\mathcal{H})$. We defined $S_{ij}=E_{i,j}+E_{j,i}$ and $A_{i,j}=E_{i,j}-E_{j,i}$ that yield respectively orthogonal bases for $\mathfrak{s}^2(\mathcal{H})$ and $\mathfrak{o}^2(\mathcal{H})$. Simple computations based on 
\[[S_{i,j},S_{k,l}]=\delta_{j,k}A_{i,l}+\delta_{j,l}A_{i,k}+\delta_{i,k}A_{j,l}+\delta_{i,l}A_{j,k}\]
and 
\[\trace(A_{i,j}A_{k,l})=2(\delta_{j,k}\delta_{i,l}-\delta_{j,l}\delta_{i,k})\]
show that
\begin{align*}
R(S_{i,j},S_{k,l},S_{m,n},S_{p,q})&=\trace\left([S_{i,j},S_{k,l}][S_{m,n},S_{p,q}]\right)\\
&=2\left(\delta_{j,l}\delta_{n,q}(\delta_{k,m}\delta_{i,p}-\delta_{k,p}\delta_{i,m})+\delta_{i,k}\delta_{m,p}(\delta_{l,n}\delta_{j,q}-\delta_{l,q}\delta_{j,n})\right).
\end{align*}
In particular for $i,j,k,l$ distincts
\begin{equation}\label{truc}(S_{i,j}\wedge S_{j,l},S_{i,k}\wedge S_{k,l})=R(S_{i,j},S_{j,l},S_{i,k},S_{k,l})=-2.\end{equation}
Since $(S_{i,j}\wedge S_{k,l})_{i,j,k,l}$ is an orthogonal base for $\overline{\Wedge\mathfrak{p}}$; if there was an operator (even an unbounded one) $C\colon \Wedge\mathfrak{p}\to\overline{\Wedge\mathfrak{p}}$ then  the vector $C(S_{i,j}\wedge S_{j,l})$ would have infinitely many coordinates equal to -2, which is impossible in a Hilbert space.
\end{exem}
Actually, Example \ref{unbounded} comes from the fact that if $A$ is a Hilbert-Schmidt operator on a Hilbert space $\mathcal{H}$ then ad$(A)$ is not necessarily a Hilbert-Schmidt operator on $\mathfrak{gl}^2(\mathcal{H})$.
\begin{exem}Consider $O^2(p,\infty)/O^2(p)\times O^2(\infty)$ as a totally geodesic submanifold of GL$_\infty^2(\R)/$O$^2(\infty)$ (see Section \ref{constructionspace}). This space as rank $p$ and its tangent space at the identity has orthogonal base $(S_{i,j})$ with $i\leq p$ and $j>p$. If $p\geq2$, the same computation as in Equation (\ref{truc}) with $i,l\leq p$ and $k,j>p$ leads to the same conclusion, that is the curvature operator does not come from an operator on $\Wedge(\mathfrak{p})$.
\end{exem}

\bibliographystyle{cdraifplain}
\bibliography{../../../Latex/Biblio/biblio.bib}
\end{document}